\def\eqdist{\stackrel{d}{=}}
\def\cL{\mathcal{L}}
\def\cS{\mathcal{S}}
\newcommand{\N}{\mathbb{N}}
\newcommand{\R}{\mathbb{R}}
\def\Prob{\mathbb{P}}
\newcommand{\Var}{\mathrm{Var}}
\newcommand\E{\mathbb{E}}
\newcommand{\1}{\mathbbm{1}}
\newcommand{\D}{\mathfrak{D}}
\newcommand{\bT}{\mathbf{T}}
\newcommand{\eps}{\varepsilon}
\def\eps{\varepsilon}
\newcommand{\abs}[1]{\left| #1 \right|}
\renewcommand{\P}{\mathbb{P}}
\newcommand{\Erw}[1]{{\mathbb{E}} \left[ #1 \right]}
\newtheorem{theorem}{Theorem}[section]
\newtheorem{Lemma}[theorem]{Lemma}
\newtheorem{Cor}[theorem]{Corollary}
\newtheorem{Rem}[theorem]{Remark}
\newtheorem{Prop}[theorem]{Proposition}
\begin{document}

\title{Precise tail index of fixed points of the two-sided
smoothing transform}
\author{Gerold Alsmeyer, Ewa Damek, Sebastian Mentemeier}

%
%
\maketitle

\begin{abstract}
We consider real-valued random variables $R$ satisfying the distributional equation 
\begin{equation*}
R \eqdist \sum_{k=1}^{N}T_{k}R_{k} + Q,
\end{equation*}
where $ R_{1},R_{2},...$ are iid copies of $R$ and independent of $\bT=(Q, (T_k)_{k \ge 1})$. $N$ is the number of nonzero weights $T_k$ and assumed to be a.s. finite. Its properties are governed by the function 
$$ m(s) := \E \sum_{k=1}^N \abs{T_k}^s .$$
There are at most two values $\alpha < \beta $ such that $m(\alpha)=m(\beta)=1$. We consider solutions $R$ with finite moment of order $s > \alpha$. We review results about existence and uniqueness. Assuming the existence of $\beta$ and an additional mild moment condition on the $T_{k}$, our main result asserts that
$$ \lim_{t \to \infty} t^\beta \P(\abs{R} > t)\ =\ K\ >\ 0. $$
the main contribution being that $K$ is indeed positive and therefore $\beta$ the precise tail index of $|R|$, for the convergence was recently shown by Jelenkovic and Olvera-Cravioto \cite{JO2010b}.
\end{abstract}


\section{Introduction}
Given a sequence $\bT:=(Q,T_{k})_{k\ge 1}$ of real-valued random variables such that
\begin{equation} N:=\sum_{k\ge 1}\1_{\{T_{k}\ne 0\}} \end{equation}
is a.s.\ finite and (w.l.o.g.) $|T_{1}|\ge...\ge |T_{N}|>|T_{N+1}|=...=0$, we consider the associated two-sided \emph{smoothing transform} (homogeneous, if $Q \equiv 0$, nonhomogeneous otherwise)
\begin{equation}\label{def:smoothing transform}
\cS:\ F\ \mapsto\ \cL\left(\sum_{k=1}^{N}T_{k}R_{k} + Q\right)
\end{equation}
which maps a distribution $F$ on $\R$ to the law of $\sum_{k=1}^{N}T_{k}R_{k} + Q$, where $R_{1},R_{2},...$ are iid random variables with distribution $F$ and independent of $\bT$. If $\cS(F)=F$, then $F$ as well as any random variable $R$ with this distribution is called a fixed point of $\cS$. In terms of random variables the fixed-point property may be expressed as
\begin{equation}\label{eq:SFPE}
R\ \eqdist\ \sum_{k=1}^{N}T_{k}R_{k} +Q
\end{equation}
where $\eqdist$ means equality in distribution. \eqref{eq:SFPE} is called a \emph{stochastic fixed point equation (SFPE)}.

It is well known that properties of fixed points of $\cS$ are intimately related to the behavior of the convex function
\begin{equation}\label{def:m}
  m(s) := \E \left( \sum_{k=1}^N \abs{T_k}^s \right). 
\end{equation}
There are at most two values $0 < \alpha < \beta$ such that $m(\alpha) = m(\beta)=1$. Assuming that both values exist, we are interested in nonzero solutions $R$ to \eqref{eq:SFPE} with finite moment of order $s>\alpha$. For a statement about existence and uniqueness of solutions with finite $\alpha$-moment see Lemma \ref{lemma:E+U} below. The situation not discussed here when solutions are mixtures of $\alpha$-stable laws (and thus having infinite $\alpha$-moment) is studied in recent work by Meiners \cite{Meiners2012}.
The main result of this paper is that (under natural assumptions)
\begin{equation}\label{eq:power law of R}
\lim_{t \to \infty} t^\beta \P(\abs{R} > t)\ =\ K\ >\ 0,
\end{equation}
the main contribution actually being that the constant $K$ is positive and thus $\beta$ the precise tail index of $|R|$. The convergence was recently derived by Jelenkovic and Olvera-Cravioto \cite{JO2010b} via an extension of Goldie's implicit renewal theorem \cite{Goldie1991} to the branching case $(\Prob(N>1)>0)$, see Theorem \ref{thm:implicitrnw}  below. In the homogeneous case with nonnegative weights, \eqref{eq:power law of R} was first shown by Guivarc'h \cite{Gui1990}, see also \cite{Liu2000} and references therein.

Our result is obtained by extending $r\mapsto\E \abs{R}^r$ as a holomorphic function and showing that it has a singularity at $\beta$ if and only if $K >0$.  This technique was first used in \cite{BDGHU2009} in the study of solutions to multidimensional affine recursions. We are grateful to Mariusz Mirek (personal communication) for bringing up our attention to it in the context of the branching equation \eqref{eq:SFPE} considered here.

We have organized this work as follows. Section \ref{sect:preliminaries} introduces notation and basic assumptions, provides information about the chosen setup and reviews preliminary results. Our main results are stated in Section \ref{sect:results}. Proofs are given in Section \ref{sect:proof1} with some more technical calculations deferred to Section \ref{sect:bounds}.

\section{Preliminaries}\label{sect:preliminaries}

\subsubsection*{Notations and assumptions}

For $m(s)$ defined in \eqref{def:m}, note that $m(0)= \E N$ may be infinite. We put
$$ \D := \{ s \ge 0 : m(s) < \infty\},\quad s_0 := \inf \,\D\quad\text{and}\quad
s_1:= \sup\,\D. $$
If $\D$ is nonempty, then $m$ is a convex function on $\D$. Since $m$ can be seen as
the Laplace transform of an intensity measure (see \cite[(1.8)]{AM2012}), we further have the following result (with $\Re z$ denoting the real part of a complex number $z$):

\begin{Lemma}\label{Lem:mholomorph} 
Suppose $\D = \{s\ge 0: m(s) < \infty \}$ has inner points, i.e.\ $s_0<s_1$. Then the function $m$ extends holomorphically to the strip
$s_0 < \Re z < s_1$.
\end{Lemma}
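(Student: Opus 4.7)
The plan is to realize $m$ as (essentially) a two-sided Laplace transform and then invoke the classical holomorphic extension theorem for such transforms. Let $\mu$ denote the intensity measure on $(0,\infty)$ defined by
$$\mu(B)\ :=\ \E\Bigl[\sum_{k=1}^N \1_B(|T_k|)\Bigr]$$
for Borel sets $B \subset (0,\infty)$, so that $m(s) = \int_0^\infty x^s\,\mu(dx)$ for every $s \in \D$ (this is the Laplace-transform interpretation alluded to via \cite{AM2012}). The finiteness of $m$ on the nonempty open interval $(s_0,s_1)$ forces $\mu$ to be $\sigma$-finite.

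For complex $z$ in the strip $s_0 < \Re z < s_1$, I would set $x^z := e^{z\log x}$ (well-defined since $\mu$ is supported on $(0,\infty)$), so that $|x^z| = x^{\Re z}$. Convexity of $m$ on $\D$ yields $m(\Re z) < \infty$, hence
$$\widetilde m(z)\ :=\ \int_0^\infty x^z\,\mu(dx)$$
is absolutely convergent throughout the open strip and coincides with $m$ on its intersection with $\R$.

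To verify holomorphy of $\widetilde m$, I would fix $z_0$ in the strip, choose $\eps > 0$ with $[\Re z_0 - \eps,\,\Re z_0 + \eps] \subset (s_0,s_1)$, and use the elementary bound
$$\abs{x^z}\ =\ x^{\Re z}\ \le\ x^{\Re z_0 - \eps}\ +\ x^{\Re z_0 + \eps}$$
valid for every $x > 0$ and every $z$ with $|z - z_0| \le \eps$ (split cases $x\le 1$ and $x>1$). The right-hand side is $\mu$-integrable by the choice of $\eps$, providing the dominating function needed to differentiate under the integral sign. Since $z \mapsto x^z$ is entire for each fixed $x > 0$, this yields complex differentiability of $\widetilde m$ at $z_0$. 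Equivalently, one could apply Morera's theorem: for any closed rectangular contour $\gamma$ inside the disk $\{|z-z_0| \le \eps\}$, $\oint_\gamma x^z\,dz = 0$, and Fubini (licensed by the same dominating function) gives $\oint_\gamma \widetilde m(z)\,dz = 0$.

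The only step requiring any care is the interchange of the $\mu$-integral with the derivative (or contour integral), and this becomes routine once the dominating function above is exhibited; the convexity of $m$ on $\D$ does all the real work.
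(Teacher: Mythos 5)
Your proof is correct and follows essentially the route the paper intends: the paper itself gives no argument for this lemma beyond citing the representation of $m$ as the Laplace/Mellin transform of the intensity measure $\mu(B)=\E\sum_{k=1}^N\1_B(|T_k|)$, and the holomorphy argument you supply (dominating function $x^{\Re z_0-\eps}+x^{\Re z_0+\eps}$ plus Morera and Fubini, or differentiation under the integral) is exactly the standard one the paper spells out in the Appendix for the analogous Lemma on Mellin transforms. The only cosmetic remark is that what you need from ``convexity'' is really just that $\D$ is an interval, which follows from the same elementary bound $x^s\le x^a+x^b$ you already use.
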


Our standing assumption throughout this paper is that 
\begin{equation}
\exists\ s_0 < \alpha < \beta < s_1:\quad m(\alpha)=m(\beta)=1.\label{A} \tag{A}
\end{equation}
Then $m'(\alpha) < 1$ and $m'(\beta) >1$.

\begin{figure}
\centering
  \includegraphics[width=\textwidth]{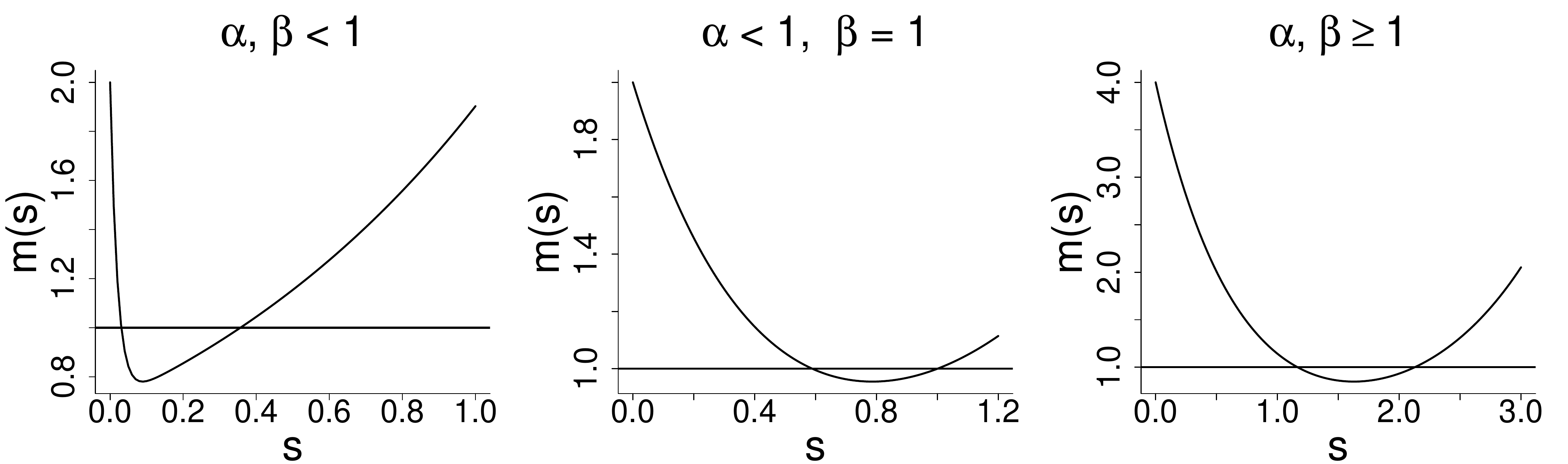} 
\caption{Three distinct cases for the values of $\alpha$ and $\beta$}
\label{fig:1}       
\end{figure}

The existence of a solution $R$ to the SFPE \eqref{eq:SFPE} with finite moment of order $s>\alpha$ and a power law behavior of type \eqref{eq:power law of R} imposes some restrictions on the range of $\alpha$ to be discussed below. Our assumptions are:
\begin{itemize}
\item $1 \le \alpha <2$ if $Q=0$ a.s.\hspace{.8cm} (homogeneous case),
\item $0<\alpha<2$ if $\Prob(Q=0)<1$\quad (nonhomogeneous case).
\end{itemize}
Further conditions are needed to rule out that the tail behavior of $R$ is governed by the tails of $N$ or of $Q$. The condition on $Q$ is quite obvious,viz.
\begin{equation}
\E \abs{Q}^s < \infty \quad \text{ for all } s < s_1 . \label{B} \tag{B}
\end{equation}
Instead of moment conditions on $N$, we will impose additional conditions on the weight sums $\sum_{k=1}^{N}|T_{k}|^{s}$ and $(\sum_{k=1}^{N}|T_{k}|)^{s}$ by introducing two functions that dominate $m(s)$ for $s \ge 1$ and $s \le 1$, respectively. Define
\begin{equation}\label{def:mu} 
\mu(s) := \E \left( \sum_{k=1}^N \abs{T_k} \right)^s 
\end{equation}
and, for $\epsilon >0$,
\begin{equation}\label{def:mepsilon} 
m_\epsilon (s) := \E \left( \sum_{k=1}^N \abs{T_k}^s \right)^{1+\epsilon}.  
\end{equation}
Then $m(s) \le \mu(s)$ for $s \ge 1$, while $m(s) \le 1 + m_\epsilon(s)$ for $s \le 1$. Put
\begin{align*}
\mathfrak{D}_\mu &:= \{ s \ge 0 \ : \ \mu(s) < \infty \}, \quad s_\infty := \sup \mathfrak{D}_\mu ; \\
\mathfrak{D}_\epsilon &:= \{ s \ge 0 : m_\epsilon(s) < \infty \}, \quad s_\epsilon := \sup \mathfrak{D}_\epsilon .
\end{align*}
Our analysis will often require the study of certain moments of order $s\ge\beta$ and the distinction between the cases when $s>1,\,=1,\,<1$. Corresponding to these cases are three different sets of assumptions we introduce now, namely:
\begin{align}
&\beta < s_\infty;\label{C}\tag{C}\\
&[\beta - \delta_0, \beta] \subset \D_{\epsilon_0}\quad\text{for some }\delta_{0},\epsilon_0 >0;\label{D}\tag{D}\\
&[\beta - \delta_0,1] \subset \D_{\epsilon_0}\quad\text{for some }\delta_{0},\epsilon_0 >0.\label{D*}\tag{D*}
\end{align}
Since $\D_{\epsilon_1} \subset \D_{\epsilon_2}$ for $\epsilon_1 \ge \epsilon_2$, we may define 
\begin{equation*}
\hat{s}_\infty := 1\wedge\lim_{\epsilon \to 0} s_\epsilon
\end{equation*}
%
%
and then note that condition $\eqref{B}$ implies 
\begin{equation}\label{eq:Q-moments}
 \E \abs{Q}^s < \infty \quad \text{for all } s < \max\{s_\infty, \hat{s}_\infty\}.
\end{equation}

Finally, if $\alpha \ge 1$, we have to assume that the mean version of the SFPE \eqref{eq:SFPE} has a solution, viz.
\begin{equation}\label{mean}\tag{E}
r\ =\ r\ \E\left(\sum _{k=1}^{N}T_k\right)\ +\ \E Q 
\end{equation}
for some $r\in\R$. Note that $r$ is unique, unless $\E(\sum_{k=1}^{N}T_{k})=1$ and $\E Q=0$.

\subsubsection*{Discussion of the restrictions on $\alpha$}

The afore-stated restrictions on the range of $\alpha$, called \emph{characteristic exponent of $\bT$ or $\cS$} in \cite{ABM2012,AM2011,AM2012}, will now be justified by a number of lemmata. The first one settles the restriction $\alpha\ge 1$ in the homogeneous case.

\begin{Lemma}\label{lem:alpha<1}
Suppose that $\alpha<1$, $Q=0$ and let $R$ be a solution to \eqref{eq:SFPE} with finite moment of order $s>\alpha$. Then $R=0$ a.s.
\end{Lemma}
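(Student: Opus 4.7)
The plan is to exploit the subadditivity of $x \mapsto x^{s'}$ for $s' \in (0,1]$ to turn the SFPE \eqref{eq:SFPE} (with $Q \equiv 0$) into a contraction estimate of the form $\E \abs{R}^{s'} \le m(s')\,\E \abs{R}^{s'}$, and then to choose $s'$ so that $m(s') < 1$.

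The first step is to pick $s' \in (\alpha, \min\{s, 1, \beta\})$. This interval is nonempty because $\alpha < 1$ by hypothesis, $\alpha < s$ by the assumption on $R$, and $\alpha < \beta$ by the standing assumption \eqref{A}. For such an $s'$, the convexity of $m$ on $\mathfrak{D}$ together with $m(\alpha) = m(\beta) = 1$ yields $m(s') < 1$, while the elementary bound $\abs{x}^{s'} \le 1 + \abs{x}^s$ ensures $\E \abs{R}^{s'} < \infty$.

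The second step is to insert this $s'$ into the SFPE: start from $R \eqdist \sum_{k=1}^N T_k R_k$, apply the triangle inequality $\bigl|\sum_k T_k R_k\bigr| \le \sum_k \abs{T_k}\abs{R_k}$, raise to the power $s' \le 1$ using the subadditivity $\bigl(\sum_k a_k\bigr)^{s'} \le \sum_k a_k^{s'}$ for $a_k \ge 0$, take expectations, and condition on $\bT$ to exploit that the $R_k$ are iid copies of $R$ independent of $\bT$. The resulting bound reads
\[
 \E \abs{R}^{s'}\ \le\ \E \sum_{k=1}^{N} \abs{T_k}^{s'}\cdot \E \abs{R}^{s'}\ =\ m(s')\,\E \abs{R}^{s'}.
\]
Since $m(s') < 1$ and $\E \abs{R}^{s'} < \infty$, this forces $\E \abs{R}^{s'} = 0$, hence $R = 0$ a.s.

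This is a standard contraction trick and presents no real obstacle; the only point to watch is that the assumption $\alpha < 1$ is used precisely to ensure that $s'$ can be chosen simultaneously $\le 1$ (needed for subadditivity), $> \alpha$ (needed for $m(s') < 1$), and $\le s$ (needed to guarantee finiteness of the $s'$-th moment of $R$).
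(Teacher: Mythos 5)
Your proof is correct and follows essentially the same route as the paper: restrict to an exponent $s'\in(\alpha,1)$ with $m(s')<1$, apply subadditivity of $x\mapsto x^{s'}$ to the homogeneous SFPE, and conclude $\E\abs{R}^{s'}\le m(s')\,\E\abs{R}^{s'}$, forcing $\E\abs{R}^{s'}=0$. If anything, you are slightly more careful than the paper in insisting $s'<\beta$ so that $m(s')<1$ (the paper's displayed chain writes equalities where subadditivity only yields $\le$), but the argument is the same.
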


\begin{proof}
Plainly, we may assume $s\in (\alpha,1)$. Then \eqref{eq:SFPE} in combination with the subadditivity of $x\mapsto x^{s}$ for $x\ge 0$ provides us with
\begin{align*}
\E|R|^{s}\ &=\ \E\left|\sum_{k=1}^N T_k R_k\right|^s = \ \sum_{k=1}^\infty \E|T_k R_k|^{s}\\
&=\ \E|R|^{s}\sum_{k=1}^\infty \E|T_k|^{s} =\ \E|R|^{s}\,m(s)
\end{align*}
and thus $\E|R|^{s}=0$.
\end{proof}

\begin{Lemma}\label{lem:alpha ge 2}
Let $R$ be a nonzero solution to \eqref{eq:SFPE} with finite moment of order $s\ge 2$. Then $m(2)\le 1$ and thus $\alpha\le 2$.
\end{Lemma}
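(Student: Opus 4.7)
The plan is a standard second-moment computation applied to the SFPE. Since the hypothesis $\E|R|^s<\infty$ for some $s\ge 2$ gives both $r:=\E R$ and $\Var(R)$ finite, one may take expectations in \eqref{eq:SFPE} (the sum being a.s.\ finite) to obtain the scalar identity $r = r\,\E(\sum_{k=1}^N T_k) + \E Q$.

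Next, centre: put $Y:=R-r$ and $Y_k:=R_k-r$, so the $Y_k$ are mean-zero iid copies of $Y$, independent of $\mathbf{T}$. Substituting into \eqref{eq:SFPE} rewrites it as
\begin{equation*}
Y\ \eqdist\ \sum_{k=1}^N T_k Y_k\ +\ Q',\qquad Q':=\,Q + r\Big(\sum_{k=1}^N T_k - 1\Big),
\end{equation*}
with $\E Q'=0$ by the relation for $r$. Squaring and taking expectations, the cross terms $T_jT_k Y_jY_k$ ($j\ne k$) vanish by iid zero-mean, and $\E[Q'\sum_k T_kY_k]=0$ since $Q'$ is measurable with respect to $(\mathbf{T},Q)$ and thus independent of the $Y_k$. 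The diagonal produces $\E(\sum_k T_k^2)\,\Var(Y)=m(2)\Var(Y)$, giving the clean identity
\begin{equation*}
(1-m(2))\,\Var(Y)\ =\ \E (Q')^2\ \ge\ 0.
\end{equation*}

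If $\Var(Y)>0$, this immediately yields $m(2)\le 1$, and the conclusion $\alpha\le 2$ follows because the convexity of $m$ together with $m(\alpha)=m(\beta)=1$, $\alpha<\beta$, forces any $s$ satisfying $m(s)\le 1$ to lie in $[\alpha,\beta]$. The only genuine obstacle is the degenerate case $\Var(Y)=0$, i.e., $R\equiv r\ne 0$ a.s.: it forces $Q=r(1-\sum_k T_k)$ a.s.\ and gives no direct information on $m(2)$. Ruling out such deterministic fixed points under the standing assumption \eqref{A}---so that the degenerate scenario still entails $m(2)\le 1$---is the expected main technical point, and would be handled either by iterating the SFPE to propagate the deterministic coupling or by appealing to the uniqueness statement referenced elsewhere in the paper.
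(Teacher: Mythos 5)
Your proof is correct and is essentially the paper's argument: your identity $(1-m(2))\,\Var(Y)=\E (Q')^2$ is exactly the law of total variance $\Var R=\E(\Var_{\bT}R)+\Var(\E_{\bT}R)$ used in the paper, since $Q'=\E_{\bT}R-r$. The degenerate case $\Var R=0$ that you flag is also left implicit in the paper (whose display simply asserts $m(2)\,\Var R>0$ for a nonzero solution), so nothing essential is missing relative to the published proof.
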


\begin{proof}
Let $\E_{\bT}$ and $\Var_{\bT}$ denote conditional expectation and conditional variance with respect to $\bT$. W.l.o.g.\ let \eqref{eq:SFPE} be valid in the stronger form $R=\sum_{k=1}^{N}T_{k}R_{k}+Q$. Then
\begin{equation}\label{eq:variance of R}
\Var\, R\ =\ \E (\Var_\bT R) + \Var(\E_\bT R).
\end{equation}
Moreover,
\begin{equation}\label{eq:cond variance of R}
\Var_\bT R\ =\ \Var_\bT\left[\sum_{k=1}^N T_k R_k + Q\right]\ =\ \sum_{k=1}^N T_k^2 \  \Var(R),
\end{equation}
whence, upon taking unconditional expectation, we obtain
\begin{align}\label{varinequality}
\infty\ >\ \Var\,R\ \ge\ \E (\Var_\bT R)\ =\ \Erw{\sum_{k=1}^N T_k^2}\Var\,R\ =\ m(2)\,\Var R\ >\ 0
\end{align}
and thus $m(2)\le 1$ as claimed.
\end{proof}

If $\alpha=2$, then \eqref{eq:cond variance of R} implies $\E\Var_{\bT}R=\Var\,R$ and thus, by \eqref{eq:variance of R}, $\Var(\E_{\bT}R)=0$. Consequently, $\E_{\bT}R$ is a.s.\ constant, in fact
\begin{equation*}
\E R\ =\ \E_{\bT}R\ =\ \E R\sum_{k=1}^{N}T_{k}+Q\quad\Prob\text{-a.s.}
\end{equation*}
or, equivalently,
\begin{equation*}
Q\ =\ \left(1-\sum_{k=1}^{N}T_{k}\right)\E\,R\quad\Prob\text{-a.s.}
\end{equation*}
For the homogeneous case, we conclude that $\sum_{k=1}^{N}T_{k}=1$ a.s.\ or $\E\,R=0$ must hold. In the first subcase, the solutions to \eqref{eq:SFPE} are the normal distributions which do not have power law tails. The second subcase, which was studied by Caliebe and R\"osler \cite{CR2003}, leads to mixtures of centered normal distributions, the mixing distribution being the law of a positive constant times the nonnegative, mean one solution to the SFPE
\begin{equation}\label{eq:squared SFPE}
W\ \eqdist\ \sum_{k=1}^{N}T_{k}^{2}W_{k}.
\end{equation}
The latter solution exists and is unique if $\mu(s)<\infty$ (see also \cite[Thm.\ 2.1]{AM2012}). The following result provides the extension to the nonhomogeneous case (see also \cite[Thm.\ 2.3]{AM2012} for the case of nonnegative $T_{k}$).

\begin{Prop}\label{prop:nonhom case, alpha=2}
Suppose that $\Prob(Q\ne 0)>0$, $\alpha =2$ and that, for some $s>2$, $m(s) < 1$ and $\mu(s) < \infty$. Suppose further that
\begin{equation}\label{eq:assumption Q}
Q = r(1 -\sum_{k=1}^N T_k) \quad \P\text{-a.s.}
\end{equation}
for some $r\ne 0$. Then, for any $v\ge 0$, there is a unique solution $R$ to the SFPE \eqref{eq:SFPE} with mean $r$ and variance $v^{2}$. It is symmetric about $r$ and has characteristic function given by
\begin{equation}\label{eq:ch.f. of R, alpha=2}
\phi_R(t) \ =\ \Erw{\exp\left(irt-\frac{v^{2}t^{2}W}{2}\right)}, 
\end{equation}
where $W$ is the unique mean one solution to \eqref{eq:squared SFPE}.
\end{Prop}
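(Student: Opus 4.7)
The plan is to first construct a candidate solution from the unique mean-one fixed point $W$ of \eqref{eq:squared SFPE} and an independent standard normal variable, verify both the SFPE and the prescribed moments, and then establish uniqueness by reducing to the centered homogeneous case handled by Caliebe and R\"osler \cite{CR2003}.

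For existence, let $W$ denote the unique nonnegative mean-one solution to \eqref{eq:squared SFPE}, which exists by the discussion preceding the proposition since $\mu(s)<\infty$ for some $s>2$. Let $Z$ be a standard normal variable independent of $W$ and define $R:=r+v\sqrt{W}\,Z$. Conditional on $W$, $R$ is normal with mean $r$ and variance $v^{2}W$, so its characteristic function equals the right-hand side of \eqref{eq:ch.f. of R, alpha=2}; moreover $\E R=r$, $\Var R=v^{2}\E W=v^{2}$, and $R$ is symmetric about $r$. To verify \eqref{eq:SFPE}, pick iid copies $(W_{k},Z_{k})_{k\ge 1}$ of $(W,Z)$, independent of $\bT$, and set $R_{k}:=r+v\sqrt{W_{k}}\,Z_{k}$. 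Conditional on $\bT$ and $(W_{k})_{k\ge 1}$, the random variable $\sum_{k=1}^{N}T_{k}R_{k}+Q$ is normal with mean $r\sum_{k=1}^{N}T_{k}+Q=r$ (by \eqref{eq:assumption Q}) and variance $v^{2}\sum_{k=1}^{N}T_{k}^{2}W_{k}$, so its (unconditional) characteristic function is
$$\Erw{\exp\!\left(irt-\frac{v^{2}t^{2}}{2}\sum_{k=1}^{N}T_{k}^{2}W_{k}\right)}\ =\ \Erw{\exp\!\left(irt-\frac{v^{2}t^{2}W}{2}\right)},$$
the equality following from the fixed-point property of $W$ for \eqref{eq:squared SFPE}. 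This matches $\phi_{R}(t)$ and confirms that $R$ solves \eqref{eq:SFPE}.

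For uniqueness, let $R$ be any solution with $\E R=r$ and $\Var R=v^{2}<\infty$. As in the analysis preceding the proposition, the hypothesis $m(2)=1$ together with \eqref{eq:variance of R}--\eqref{eq:cond variance of R} forces $\Var(\E_{\bT}R)=0$, so $\E_{\bT}R=r$ $\Prob$-a.s. Subtracting $r$ from both sides of \eqref{eq:SFPE} and using \eqref{eq:assumption Q} gives
$$R-r\ \eqdist\ \sum_{k=1}^{N}T_{k}(R_{k}-r),$$
so $\tilde R:=R-r$ is a centered, finite-variance solution to the homogeneous SFPE. Under the present assumptions (particularly $\mu(s)<\infty$ for some $s>2$), the theorem of Caliebe and R\"osler \cite{CR2003} identifies all such solutions as scale mixtures of centered normals with mixing law that of $v^{2}W$, the positive constant being pinned down by $\Var\tilde R=v^{2}$. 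Hence $\phi_{\tilde R}(t)=\E\exp(-v^{2}t^{2}W/2)$, and multiplication by $e^{irt}$ yields \eqref{eq:ch.f. of R, alpha=2}.

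The main obstacle is the uniqueness step, which hinges on the Caliebe--R\"osler classification of centered square-integrable fixed points of the homogeneous smoothing transform in the critical case $m(2)=1$; a self-contained argument would require showing, via the strict contraction afforded by $m(s)<1$ for some $s>2$, that the characteristic function of $\tilde R$ must take the Laplace-transform form of a mixture of centered Gaussians, and then invoking uniqueness of the mean-one solution to \eqref{eq:squared SFPE} to pin down the mixing law. In our setting it suffices to cite \cite{CR2003}, after checking that our hypotheses supply theirs.
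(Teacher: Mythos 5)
Your existence/verification step is essentially the paper's: realizing the candidate as $r+v\sqrt{W}Z$ with $Z$ standard normal independent of $W$ and checking the fixed-point property by conditioning on $\bT$ and $(W_k)$ is the same characteristic-function computation the paper carries out directly on \eqref{eq:ch.f. of R, alpha=2}. Where you genuinely diverge is the uniqueness step. The paper obtains uniqueness from a contraction argument: since $m(s)<1$ and $\mu(s)<\infty$ for some $s>2$, the smoothing transform $\cS$ is a strict contraction in the Zolotarev metric $\zeta_s$ on the set of laws with prescribed first and second moments (an extension of Lemma 3.1 of \cite{NR2004}), so there is at most one fixed point with mean $r$ and variance $v^2$ (implicitly, within the class of laws for which $\zeta_s$ is finite, i.e.\ with finite $s$-th moment). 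You instead subtract $r$, use \eqref{eq:assumption Q} to reduce to the centered homogeneous equation $R-r\eqdist\sum_{k=1}^N T_k(R_k-r)$, and invoke the Caliebe--R\"osler classification of centered finite-variance fixed points as $W$-mixtures of centered normals. This is a legitimate alternative: the paper itself appeals to exactly that classification in the homogeneous discussion immediately preceding the proposition, and the hypotheses of \cite{CR2003} are supplied here ($m(2)=1$ together with $m(s)<1$ for some $s>2$ forces $m'(2)<0$, and $\mu(s)<\infty$ secures existence and uniqueness of the mean-one $W$, as the paper notes). What each route buys: the Zolotarev argument is self-contained modulo \cite{NR2004} and does not rely on a full classification theorem; your route delivers uniqueness within the a priori larger class of \emph{all} finite-variance solutions with the given mean and variance, at the price of importing the CR2003 classification, whose hypotheses you rightly flag as needing verification. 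A minor remark: the observation that $\Var(\E_{\bT}R)=0$ is not needed for your reduction, since \eqref{eq:assumption Q} alone already yields the homogeneous equation for $R-r$.
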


\begin{proof}
If $m(s) < 1$, $\mu(s) < \infty$ for some $s>2$, then the smoothing transform is a contraction with respect to the Zolotarev metric $\zeta_{s}$ as defined in \cite{NR2004} on the subsets of probability measures with fixed first and second moment. This fact is easily derived from (a straightforward extension of) Lemma 3.1 in the afore-mentioned reference. Hence we conclude that $\cS$ has a unique fixed point with mean $r$ and arbitrary variance $v^{2}\ge 0$. Therefore, it remains to verify that $R$ with characteristic function given by \eqref{eq:ch.f. of R, alpha=2} does indeed solve our SFPE \eqref{eq:SFPE}. To this end, let $F$ be the law of $R$. Then, with $R_{1},R_{2},...$ and $W_{1},W_{2},...$ being i.i.d.\ copies of $R$ and $W$, respectively, and also independent of $\bT$, we obtain
\begin{align*}
\phi_{\cS(F)}(t)\ &=\ \Erw{\exp \left( it \sum_{k=1}^N T_k R_k + Q  \right)} \\
&=\ \Erw{\exp(itQ)\,\E_{\bT} \left( \prod_{k=1}^N \exp ( it T_k R_k ) \right)} \\
&=\ \Erw{\exp(itQ)\prod_{k=1}^N \phi_R(tT_k)}\\
&=\  \Erw{\exp\left(itr\left(1-\sum_{k=1}^{N}T_{k}\right)\right) \prod_{k=1}^{N}\E_{\bT}\left[\exp\left(irtT_{k}-\frac{v^{2}t^{2}T_{k}^{2}W_{k}}{2}\right)\right]} \\
&=\  \Erw{\exp\left(itr\left(1-\sum_{k=1}^{N}T_{k}\right)+irt\sum_{k=1}^{N}T_{k}-\frac{v^{2}t^{2}}{2}\sum_{k=1}^{N}T_{k}^{2}W_{k}\right)} \\
&=\ \Erw{\exp\left(irt-\frac{v^{2}t^{2}W}{2}\right)}\\
&=\ \phi_R(t),
\end{align*}
where we have used assumption \eqref{eq:assumption Q} on $Q$ in line four and the fixed-point property \eqref{eq:squared SFPE} for $W$ in the last line.
\end{proof}

Summarizing the situation in the case $\alpha=2$ and $\Prob(\sum_{k=1}^{N}T_{k}=1)<1$, 
the law of $R$ in \eqref{eq:ch.f. of R, alpha=2} is a $W$-mixture of normal laws with some fixed mean $r\in\R$ and variance $v^{2}w$. It exhibits a power law behavior only if this is true for (the law of) $W$ which in turn is a fixed point of the smoothing transform pertaining to $(T_{k}^{2})_{k\ge 1}$, the latter having characteristic exponent 1. With regard to \eqref{eq:power law of R}, it is therefore no loss of generality to assume $\alpha<2$ hereafter.

\subsubsection*{Existence and uniqueness of a fixed point with finite $\alpha$-moment}

The following lemma compiles results about existence and uniqueness of a solution to \eqref{eq:SFPE} with finite moment of order $\alpha$ and may be deduced from results in \cite[Section 3]{Roesler:92} and \cite[Section 3]{NR2004}. 

\begin{Lemma}\label{lemma:E+U}
Assume \eqref{A}, \eqref{B} and $\alpha <2$.
\begin{itemize}
\item[(a)] If $\alpha <1$, then there exists a unique solution $R$ to \eqref{eq:SFPE}
such that $\E \abs{R}^s < \infty$ for all $s < \beta$. It is nonzero iff $\Prob(Q \neq 0) >0$. 
\item[(b)] If $\alpha \ge 1$ and \eqref{C}, \eqref{mean} are valid, then there is a unique solution $R$ to \eqref{eq:SFPE} with $\E R=r$ (determined by \eqref{mean}) and $\E \abs{R}^s < \infty$ for all $s < \beta$. For the nonhomogeneous equation $R$ is always nonzero, and for the homogeneous one $R$ is nonzero iff $r\neq 0$.
\end{itemize}
\end{Lemma}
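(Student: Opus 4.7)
The plan is to apply the Banach fixed point theorem to the smoothing transform $\cS$ on a suitably chosen complete metric space of probability distributions, following the paradigm of R\"osler~\cite{Roesler:92} and Neininger--R\"uschendorf~\cite{NR2004}. The decisive quantitative input is that, since $m$ is convex with $m(\alpha)=m(\beta)=1$ by \eqref{A}, we have $m(s)<1$ for every $s\in(\alpha,\beta)$, and this strict sub-unity furnishes the contraction factor.

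For part (a), fix $s\in(\alpha,1)$ and work on the set $\mathcal{M}_s$ of probability measures on $\R$ with finite $s$-moment, equipped with the minimal $L^s$-metric $\ell_s(F,G)=\inf\{\E|X-Y|^s:X\sim F,\ Y\sim G\}$. Because $s<1$, subadditivity of $x\mapsto x^s$ makes $\ell_s$ a genuine metric and $(\mathcal{M}_s,\ell_s)$ complete. Coupling the iid copies $(R_k)$ and $(R_k')$ componentwise via optimal couplings yields, again by subadditivity,
\[
\ell_s(\cS(F),\cS(G))\ \le\ \E\sum_{k=1}^N|T_k|^s\,\ell_s(F,G)\ =\ m(s)\,\ell_s(F,G),
\]
so $\cS$ is a strict contraction on $\mathcal{M}_s$ and Banach delivers a unique fixed point $R\in\mathcal{M}_s$.

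For part (b), condition \eqref{mean} pins down the mean $r$ of any prospective fixed point, so one works instead on $\mathcal{M}_s^r$, the set of distributions with mean $r$ and finite $s$-moment, for some $s\in(\alpha,\beta)\cap[1,2)$, equipped with the Zolotarev metric $\zeta_s$. Invariance of the mean under $\cS$ is exactly \eqref{mean}; preservation of the finite $s$-moment is secured by a Minkowski-type bound that requires $\mu(s)<\infty$, which is where \eqref{C} enters. The Neininger--R\"uschendorf calculation (cf.\ Lemma~3.1 of~\cite{NR2004}) then yields
\[
\zeta_s(\cS(F),\cS(G))\ \le\ m(s)\,\zeta_s(F,G),
\]
so Banach again produces a unique $R\in\mathcal{M}_s^r$.

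To extend $\E|R|^s<\infty$ to $\E|R|^{s'}<\infty$ for every $s'\in(\alpha,\beta)$, one reruns the contraction with $s$ replaced by a larger $s'$: the resulting fixed point lies in the smaller space $\mathcal{M}_{s'}\subset\mathcal{M}_s$ (respectively $\mathcal{M}_{s'}^r\subset\mathcal{M}_s^r$) and must coincide with $R$ by the uniqueness already established. When $\beta>2$ in~(b), one additionally bootstraps beyond the Zolotarev window via the SFPE itself, using \eqref{C} and \eqref{B} to control higher moments of the right-hand side in terms of lower-order moments of $R$. Nontriviality is then a consistency check: in~(a), $R\equiv0$ solves \eqref{eq:SFPE} iff $Q=0$ a.s., so uniqueness gives $R\neq0$ iff $\Prob(Q\neq0)>0$; in~(b), the nonhomogeneous equation excludes $R\equiv0$ (it would force $Q=0$ a.s.), while in the homogeneous case \eqref{mean} reads $r(1-\E\sum_{k=1}^N T_k)=0$, so $R\neq0$ precisely when $r\neq0$. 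The main technical obstacle I expect is the Zolotarev contraction estimate in part (b): it rests on a Taylor expansion of the test functions around the mean whose first-order term cancels only thanks to \eqref{mean}, and whose remainder must be controlled uniformly in $N$ via the Minkowski-type inequality supplied by \eqref{C}.
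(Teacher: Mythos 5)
Your proposal is correct and takes essentially the same approach as the paper: the paper offers no argument for this lemma beyond the remark that it ``may be deduced from results in \cite[Section 3]{Roesler:92} and \cite[Section 3]{NR2004}''. Your sketch --- the minimal $\ell_s$-contraction for $s<1$, the Zolotarev $\zeta_s$-contraction on the fixed-mean class for $s\in[1,2)$ (with \eqref{mean} fixing $r$ and \eqref{C} supplying the Minkowski-type moment bound), followed by the moment bootstrap and the nontriviality check --- is precisely the content of those cited results.
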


 \begin{Rem}\rm
Since for the homogeneous equation two nonzero solutions with distinct means are proportional, we may in fact speak of the unique nonzero solution with the property
$\E |R|^s<\infty $ for $s<\beta$ when stipulating $\E\,R =1$.
\end{Rem}

The following lemma sheds some light on the role of the function $\mu(s)$. As before, $\E_{\bT}$ denotes conditional expectation with respect to $\bT=(Q, (T_k)_{k \ge 1})$.

\begin{Lemma}\label{lemma:Burkholder}
Let $s \ge 1$ and $\mu(s) < \infty$. Then $\E
\abs{R}^s < \infty$ implies
\begin{equation} \label{eq:Burkholder}
\E_{\bT}\left( \sum_{k=1}^N \abs{T_k R_k} \right)^s\ \le\ C \left( \sum_{k=1}^N \abs{T_k} \right)^s\E \abs{R}^s  
\end{equation}
for some $C>0$ which only depends on $s$.
\end{Lemma}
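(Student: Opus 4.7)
The plan is to condition on $\bT$ and exploit the fact that, conditionally on $\bT$, the random variables $|T_1R_1|, |T_2R_2|, \ldots$ are independent with $\E_{\bT}|T_kR_k|^s = |T_k|^s\,\E|R|^s$, since the $R_k$ are iid copies of $R$ independent of $\bT$. The name ``Burkholder'' suggests a martingale-inequality approach, centering the $|R_k|$ and applying a Burkholder/von Bahr--Esseen/Rosenthal-type bound to the resulting martingale of partial sums (conditional on $\bT$). But for the particular bound stated, Minkowski's inequality in $L^s(\P)$ (valid because $s\ge 1$) already suffices and actually yields the constant $C=1$.

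Concretely, since $N$ is a.s.\ finite, the sum is finite $\P$-a.s., so conditionally on $\bT$ we may apply the triangle inequality in $L^s(\P)$ to the nonnegative summands $|T_kR_k|$:
\begin{equation*}
\left(\E_{\bT}\left(\sum_{k=1}^N |T_kR_k|\right)^{\!s}\right)^{\!1/s}
\ \le\ \sum_{k=1}^N \left(\E_{\bT}|T_kR_k|^s\right)^{1/s}
\ =\ \left(\E|R|^s\right)^{1/s}\sum_{k=1}^N|T_k|,
\end{equation*}
where in the equality we used that $T_k$ is $\bT$-measurable and $R_k$ is independent of $\bT$ with $\E|R_k|^s=\E|R|^s<\infty$. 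Raising to the $s$-th power gives the claim.

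If instead one wishes to follow the martingale route suggested by the lemma's name, one would write $|T_kR_k|=|T_k|(|R_k|-\E|R|)+|T_k|\,\E|R|$, note that the partial sums of the centered pieces form a martingale conditional on $\bT$, apply Burkholder's inequality (or von Bahr--Esseen when $1\le s\le 2$, Rosenthal when $s\ge 2$) to bound their $L^s$-norm by a constant multiple of $\bigl(\sum_k|T_k|^s\,\E|R|^s\bigr)^{1/s}$, and finally use the elementary $\ell^s\subset \ell^1$ inclusion $(\sum|T_k|^s)^{1/s}\le\sum|T_k|$ valid for $s\ge 1$. Combining with the deterministic (given $\bT$) second term $\E|R|\sum_k|T_k|\le(\E|R|^s)^{1/s}\sum_k|T_k|$ gives the stated bound, albeit with a larger constant.

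The only mild subtlety is making sure that the application of Minkowski or BDG is legitimate conditionally on $\bT$ when $N$ is random but a.s.\ finite; this is handled by viewing the sum as $\sum_{k\ge 1}|T_k|\,|R_k|\mathbbm{1}_{\{k\le N\}}$ and using that the indicator $\mathbbm{1}_{\{k\le N\}}$ is $\bT$-measurable, so the independence structure of $(R_k)_{k\ge 1}$ from $\bT$ is preserved. No other obstacle is expected; this is essentially a direct application of $L^s$ triangle inequality.
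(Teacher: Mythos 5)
Your Minkowski argument is correct and complete, and it is genuinely different from (and more elementary than) the paper's proof. The paper centers the summands at $r=\E R$, views $\bigl(\sum_{k=1}^n T_k(R_k-r)\bigr)_{n\ge0}$ as a zero-mean martingale given $\bT$, and invokes a Burkholder--Davis--Gundy inequality from Chow--Teicher; you instead apply the $L^s$ triangle inequality conditionally on $\bT$ to the nonnegative summands $|T_k|\,|R_k|$, using $\E_{\bT}|T_kR_k|^s=|T_k|^s\,\E|R|^s$, which yields \eqref{eq:Burkholder} with the sharp constant $C=1$. Since the right-hand side involves the full $\ell^1$-norm $(\sum_k|T_k|)^s$ of the weights, nothing finer than Minkowski is needed; the martingale machinery would only pay off if one wanted a bound through the square function or through $\sum_k|T_k|^s$. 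One small caveat concerning your alternative martingale sketch (which you do not rely on): for $s>2$, Burkholder/Rosenthal does not bound the centered sum by a multiple of $\bigl(\sum_k|T_k|^s\,\E|R|^s\bigr)^{1/s}$ alone --- a term involving $\bigl(\sum_kT_k^2\bigr)^{s/2}$ also appears --- though since $\bigl(\sum_kT_k^2\bigr)^{s/2}\le\bigl(\sum_k|T_k|\bigr)^s$ the final conclusion is unaffected. Your treatment of the random $N$ via the $\bT$-measurable indicators is the right way to justify the conditional application.
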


\begin{proof}
This follows by an application of one of the Burkholder-Davis-Gundy inequalities (see e.g.\ \cite[Thm.\ 11.3.2]{Chow+Teicher:97}) when observing that, given $\bT$ and with $r=\E\,R$,
$$ \sum_{k=1}^{N}T_{k}(R_{k}-r)\ =\ \sum_{k\ge 1}T_{k}(R_{k}-r)\1_{\{N>n\}} $$
is the limit of the zero-mean martingale $(\sum_{k=1}^{n}T_{k}(R_{k}-r))_{n\ge 0}$, which in fact consists of finite weighted sums of i.i.d.\ random variables.
\end{proof}

Note that for $s \le 1$, we have the bound
\begin{equation}\label{eq:subadditive}
\E_{\bT} \left( \sum_{k=1}^N \abs{T_k R_k} \right)^s\ \le\ \left( \sum_{k=1}^N \abs{T_k}^s \right) \E \abs{R}^s 
\end{equation}
due to subadditivity of $x\mapsto x^{s}$ for $x\ge 0$. Taking unconditional expectation in \eqref{eq:Burkholder} and \eqref{eq:subadditive}, we arrive
\begin{align}\label{eq:bound for ER^s}
\E\left( \sum_{k=1}^N \abs{T_k R_k} \right)^s\ \le\ 
\begin{cases}
C\mu(s)\E|R|^{s},&\text{if }s\ge 1,\\
m(s)\E|R|^{s},&\text{if }s\le 1.
\end{cases}
\end{align}

\subsubsection*{The Implicit Renewal Theorem by Jelenkovic and Olvera-Cravioto}

Our analysis embarks on the following result about the tails of fixed points of two-sided smoothing transforms due to Jelenkovic and Olvera-Cravioto \cite{JO2010b}:

\begin{theorem}\label{thm:implicitrnw}
Suppose that \eqref{A} holds, that $\P({T_j}< 0)>0$ for some $j\ge 1$ and that $\P(\log \abs{T_k} \in \cdot, N\ge k)$ is nonlattice for some $k\ge 1$. Further assume \eqref{C} if $\beta>1$, and \eqref{D} if $\beta\le 1$. Let $R$ be the unique solution to \eqref{eq:SFPE}. Then
\begin{equation*}
\lim_{t \to \infty}t^\beta \P(\abs{R}>t)\ =\ \frac{K(\beta)}{m'(\beta)},
\end{equation*}
where
\begin{equation*}\label{eq:K}
K(\beta)\ :=\ \int_0^\infty \left( \P(\abs{R}>t) - \sum_{k\ge 1} \P(\abs{T_k R_k}>t) \right) t^{\beta -1}\ dt.
\end{equation*}
\end{theorem}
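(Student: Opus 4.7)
The plan is to follow the Goldie-style implicit renewal technique \cite{Goldie1991} of Jelenkovic and Olvera-Cravioto \cite{JO2010b}, adapted to the branching setting of \eqref{eq:SFPE}. The idea is to transform the SFPE, via an exponential tilt at the critical exponent $\beta$, into a classical renewal equation on $\R$ driven by a probability measure of positive mean, and then invoke the key renewal theorem.

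Set $\Fbar(t):=\P(|R|>t)$, $g(u):=e^{\beta u}\Fbar(e^u)$ for $u\in\R$, and
$$\phi(t)\ :=\ \Fbar(t)-\sum_{k\ge 1}\P(|T_kR_k|>t).$$
Conditioning on $\bT$ gives $\sum_{k\ge 1}\P(|T_kR_k|>t)=\E\sum_{k\ge 1}\Fbar(t/|T_k|)\1_{\{T_k\ne 0\}}$, so multiplication by $t^\beta$ followed by the substitution $u=\log t$ transforms the SFPE into the renewal equation
$$g(u)\ =\ \int_\R g(u-s)\,\nu(ds)\ +\ \psi(u),\qquad u\in\R,$$
with $\psi(u):=e^{\beta u}\phi(e^u)$ and $\nu(ds):=\E\sum_{k\ge 1}|T_k|^\beta\1_{\{T_k\ne 0\}}\delta_{\log|T_k|}(ds)$. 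Because $m(\beta)=1$, $\nu$ is a probability measure with mean $m'(\beta)>0$ (by convexity of $m$ and $\alpha<\beta$), and the nonlattice hypothesis of the theorem transfers to $\nu$. The identity
$$\int_\R\psi(u)\,du\ =\ \int_0^\infty\phi(t)\,t^{\beta-1}\,dt\ =\ K(\beta)$$
then shows that, once the key renewal theorem for (possibly two-sided) random walks applies, $g(u)\to K(\beta)/m'(\beta)$ as $u\to\infty$, which upon inverting the substitution $t=e^u$ is precisely the asserted limit.

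The technical core of the proof is verifying that $\psi$ is directly Riemann integrable. For the tail as $u\to+\infty$ I would bound $|\phi(t)|$ via an inclusion--exclusion estimate of the form
$$|\phi(t)|\ \le\ \sum_{k}\P\Bigl(|T_kR_k|>t,\,\bigl|\textstyle\sum_{j\ne k}T_jR_j+Q\bigr|>\eta t\Bigr)\,+\,\P(|Q|>\eta t)$$
for some small $\eta>0$, and then estimate each term by Markov's inequality at an order $s>\beta$ admissible under \eqref{B} and either \eqref{C} (if $\beta>1$) or \eqref{D} (if $\beta\le 1$); the required $s$-moment bounds on $\sum_{j\ne k}T_jR_j+Q$ follow from Lemma~\ref{lemma:Burkholder} and \eqref{eq:bound for ER^s}, and those on $Q$ from \eqref{eq:Q-moments}. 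This yields $|\phi(t)|\le Ct^{-\beta-\delta}$ for some $\delta>0$, hence $|\psi(u)|\le Ce^{-\delta u}$ for large $u$. For $u\to-\infty$, i.e.\ $t\downarrow 0$, one combines $\Fbar(t)\le 1$ with $\sum_k\P(|T_kR_k|>t)\le Ct^{-(\beta-\delta')}$ (via Markov at an order $\beta-\delta'>\alpha$) to dominate $\psi(u)$ by a multiple of $e^{\beta u}+e^{\delta' u}$. Combined with the monotonicity of $\Fbar$, these bounds imply the desired direct Riemann integrability.

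The main obstacle is precisely this integrability verification: the estimates must be carried out simultaneously in the three regimes $\beta>1$, $\beta=1$ and $\beta<1$, each calling for slightly different moment control on the branching sum $\sum_k T_kR_k$; this is exactly why the variants \eqref{C}, \eqref{D} and \eqref{D*} are introduced. A secondary subtlety is that $\nu$ typically charges both half-lines of $\R$ (whenever $\P(|T_k|>1)>0$ for some $k$), so the resulting renewal equation is genuinely two-sided and one must invoke the key renewal theorem in its two-sided form---valid here thanks to the nonlattice hypothesis and $m'(\beta)>0$.
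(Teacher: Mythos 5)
Your outline reconstructs the proof of the cited result of Jelenkovic and Olvera-Cravioto, whereas the paper's own ``proof'' of Theorem \ref{thm:implicitrnw} is deliberately minimal: it simply invokes \cite[Theorem 3.4]{JO2010b} and contributes only the verification that \eqref{C}, resp.\ \eqref{D}, imply finiteness of the integrals \eqref{E1} and \eqref{E2}, which is the integrability hypothesis of that theorem; this verification is deferred to Section \ref{sect:bounds} (the proof of Proposition \ref{Prop:moments} run with $r=\beta$ and $(\cdot)^{\pm}$ in place of $\abs{\cdot}$). Your skeleton --- the exponential tilt at $\beta$, the identification of $\nu(ds)=\E\sum_k\abs{T_k}^\beta\delta_{\log\abs{T_k}}(ds)$ as a probability measure of mean $m'(\beta)>0$, and the identity $\int_{\R}\psi(u)\,du=K(\beta)$ --- is exactly the skeleton of that underlying proof.

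However, there are genuine gaps if this is read as a self-contained argument. First, the step ``once the key renewal theorem applies, $g(u)\to K(\beta)/m'(\beta)$'' is precisely the nontrivial content of \emph{implicit} renewal theory: the equation $g=g*\nu+\psi$ on all of $\R$ does not by itself identify $g$ with $\psi*U$ ($U$ the renewal measure), because $g$ is not known a priori to be bounded or directly Riemann integrable --- boundedness of $g$ is essentially the conclusion one is after. Goldie resolves this by passing to the smoothed function $\check g(u)=\int_{-\infty}^u e^{-(u-v)}g(v)\,dv$, and in the branching case one must additionally show that the $n$-fold iterates of the equation (sums over the weighted branching tree, controlled via the many-to-one change of measure associated with $m(\beta)=1$) leave a remainder that vanishes; none of this is addressed. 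Second, your claimed pointwise bound $\abs{\phi(t)}\le Ct^{-\beta-\delta}$ is neither available under \eqref{C}/\eqref{D} nor needed: what these conditions actually yield (and what Lemmata \ref{Lemma:qbound}--\ref{Lemma:sumbound} prove) is the \emph{integrability} $\int_0^\infty\abs{\phi(t)}\,t^{s-1}dt<\infty$ for $s$ in a neighbourhood of $\beta$, which feeds into the smoothing device rather than into a direct dRi check. Third, since $\P(T_j<0)>0$ is assumed, the argument in \cite{JO2010b} runs through the one-sided tails $\P(R>t)$ and $\P(R<-t)$ separately, which is why the paper needs the one-sided integrals \eqref{E1} and \eqref{E2} and not merely the absolute-value difference $\phi$; your reduction to $\phi$ alone discards information required in the two-sided case (note that $\abs{\P(R>t)-\sum_k\P(T_kR_k>t)}$ is not dominated by $\abs{\phi(t)}$).
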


\begin{proof}
As will be explained at the beginning of Section \ref{sect:bounds}, conditions \eqref{C} and\eqref{D} imply the finiteness of
\begin{equation}\label{E1}
\int_0^\infty \abs{ \P(R>t) - \sum_{k\ge 1} \P(T_k R_k>t)} t^{\beta -1}\ dt 
\end{equation}
and
\begin{equation}\label{E2}
\int_0^\infty \abs{\P(R<-t) - \sum_{k\ge 1} \P(T_k R_k<-t)} t^{\beta -1}\ dt,
\end{equation}
respectively. Taking this for granted here, the stated result is \cite[Theorem 3.4]{JO2010b}.
\end{proof}

\section{Main result}\label{sect:results}

We are now ready for our main result which, loosely speaking, asserts that either $R$ has power tails of order $\beta$, or a finite moment of order $s>\beta$. 

\begin{theorem}\label{thm:main result}
Under the assumptions of Theorem \ref{thm:implicitrnw}, the following assertions hold true:
\begin{itemize}
\item[(a)] If $\beta >1$ and \eqref{A}, \eqref{B}, \eqref{C} hold true, then either $K(\beta) >0$, or $\E \abs{R}^s < \infty $ for all $s < s_\infty$.
\item[(b)] If $\beta\le 1$ and \eqref{A}, \eqref{B}, \eqref{C}, \eqref{D*} hold true, then 
either $K(\beta) >0$, or $\E \abs{R}^s < \infty $ for all $s < s_\infty$.
\item[(c)] If $\beta < 1$ and \eqref{A}, \eqref{B}, \eqref{D} hold true, then either $K(\beta) >0$, or $\E \abs{R}^s < \infty $ for all $s < \hat{s}_\infty$.
\end{itemize}
\end{theorem}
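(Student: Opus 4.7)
The plan is to exploit the fact that the Mellin transform $\phi(z) := \E|R|^z$ is holomorphic in the strip $\alpha < \Re z < \beta$ (by Lemma \ref{lemma:E+U} and standard analyticity of Mellin transforms of positive measures), and to determine by analytic continuation whether its upper abscissa of convergence along the positive real axis equals $\beta$ (corresponding to $K(\beta) > 0$) or is strictly larger (corresponding to higher moments being finite).

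First I would set up a \emph{functional equation} for $\phi$. Writing $\E|R|^z = z\int_0^\infty t^{z-1}\P(|R|>t)\,dt$ and adding and subtracting the series $\sum_{k\ge 1}\P(|T_k R_k|>t)$ inside the integrand yields
\begin{equation}\label{eq:FEplan}
(1-m(z))\,\phi(z)\ =\ z\cdot G(z),\qquad G(z) := \int_0^\infty t^{z-1}\left[\P(|R|>t) - \sum_{k\ge 1}\P(|T_k R_k|>t)\right]dt,
\end{equation}
initially valid in $\alpha < \Re z < \beta$, where the splitting is legitimate since $\sum_k\E|T_kR_k|^z = m(z)\phi(z)$ is finite. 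Note that $G(\beta) = K(\beta)$ in the notation of Theorem \ref{thm:implicitrnw}.

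The core technical step is to extend $G$ holomorphically across $\beta$ to a strip $\alpha < \Re z < s_*$, with $s_* := s_\infty$ in cases (a), (b) and $s_* := \hat{s}_\infty$ in case (c). This amounts to proving absolute convergence of $\int t^{\Re z - 1}\,|\mathrm{integrand}|\,dt$ for real parts strictly exceeding $\beta$, and is delicate because the two tail terms \emph{individually} fail to be integrable at infinity beyond $\beta$; one must exploit the cancellation between $\P(|R|>t)$ and $\sum_k\P(|T_kR_k|>t)$. The required estimates are precisely those deferred to Section \ref{sect:bounds}: for $\Re z > 1$ one invokes Lemma \ref{lemma:Burkholder} together with assumption \eqref{C} and the function $\mu$, whereas for $\Re z \le 1$ one uses the subadditivity bound \eqref{eq:subadditive} together with the mildly improved control afforded by \eqref{D} or \eqref{D*} via $m_\epsilon$. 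This is the step I expect to be the main obstacle.

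Granting the extension, the conclusion follows readily. By Lemma \ref{Lem:mholomorph}, $m$ is holomorphic on $\{s_0 < \Re z < s_1\}$, and since $m'(\beta)>1>0$ the factor $1-m(z)$ has a simple zero at $z=\beta$. Hence \eqref{eq:FEplan} provides a meromorphic continuation of $\phi$ to the enlarged strip, with at worst a simple pole at $\beta$ whose residue is proportional to $G(\beta)=K(\beta)$. If $K(\beta)>0$, the first alternative of the theorem holds. If $K(\beta)=0$, the pole is cancelled and $\phi$ is holomorphic at $\beta$; moreover, by convexity of $m$ together with $m(\alpha)=m(\beta)=1$, no further zeros of $1-m$ lie in $(\beta, s_1)$, so no further poles appear within $(\beta, s_*)$. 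Landau's theorem for Mellin transforms of nonnegative measures (the abscissa of convergence must be a singularity of the transform) then forces $\E|R|^s<\infty$ for all real $s<s_*$, which is the second alternative.
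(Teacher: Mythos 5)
Your overall architecture coincides with the paper's: the identity $(1-m(z))\,\E|R|^z = z\,K(z)$ on $\alpha<\Re z<\beta$, meromorphic continuation with a simple pole at $\beta$ whose residue is proportional to $K(\beta)$, and an appeal to Landau's theorem (Corollary \ref{Cor:Landau}) for the moment function. However, there is a genuine gap in what you call the core technical step. You propose to extend the Mellin transform of the cancellation term holomorphically, in one stroke, to the whole strip $\alpha<\Re z<s_*$, and you assert that the required estimates are exactly those of Section \ref{sect:bounds}. They are not: Proposition \ref{Prop:moments} (and the bounds on $H$, $I$, $J$ behind it) only yields $K(\sigma+\delta)<\infty$ for \emph{some} $\delta>0$, where $\sigma=\sup\{s>0:\E|R|^s<\infty\}$, and its proof consumes moments of $R$ of order up to (nearly) $\sigma$ --- via Lemma \ref{lemma:Burkholder}, H\"older's inequality, and the Jensen/subadditivity steps. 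To establish absolute convergence of the defining integral of $K$ at real parts close to $s_*$ you would already need $\E|R|^{s}<\infty$ for $s$ close to $s_*$, which is precisely the conclusion you are trying to reach; as stated, your plan is circular.

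The repair is the bootstrap the paper actually performs. \emph{Step 1:} with moments of $R$ available up to $\beta$ (Lemma \ref{lemma:E+U}), Proposition \ref{Prop:moments} extends $K$ holomorphically to $\alpha<\Re z<\beta+\delta$ for some $\delta>0$; if $K(\beta)=0$ the simple zero of $1-m$ at $\beta$ is cancelled and one concludes $\sigma>\beta$, while $K(\beta)>0$ forces $\E|R|^{\beta}=\infty$. \emph{Step 2:} assume for contradiction that $\beta<\sigma<s_*$. Since $1-m$ has no further real zeros on $(\beta,s_1)$ (by convexity, as you correctly note), the functional equation together with a \emph{second} application of Proposition \ref{Prop:moments} --- now legitimate because moments of order up to $\sigma$ are known --- extends $\E|R|^z$ holomorphically onto a neighborhood of the real point $\sigma$, contradicting Corollary \ref{Cor:Landau}. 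Note also that Landau's theorem only requires holomorphy near the real point $\sigma$, so one need not control possible complex zeros of $1-m(z)$ elsewhere in the strip, an issue your global ``meromorphic continuation to the enlarged strip'' phrasing would otherwise have to address.
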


The following proposition provides a sufficient condition for $K(\beta)>0$.

\begin{Prop}\label{prop:sufficient condition}
Keeping the assumptions of Theorem \ref{thm:main result}, let $k\in\N$ be such that $\Prob(\log|T_{k}|\in\cdot,N\ge k)$ is nonlattice and assume that $\E\abs{T_k}^\gamma =1$ for some $\beta < \gamma
< s_\infty$ in parts (a), (b), resp. $\beta < \gamma < \hat{s}_\infty$ in part (c). Then
$$ K(\beta)>0\quad\text{iff}\quad
\P\left(r\sum_{k=1}^N T_k +Q= r\right) < 1\text{ for all }r\ne 0, $$
the latter condition being equivalent to
$$ \P\left(\sum_{k=1}^N T_k = 1\right) < 1 $$
in the homogeneous case.
\end{Prop}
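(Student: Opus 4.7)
I would split the argument into the two directions plus the homogeneous specialization.

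\emph{Easy direction (non-degeneracy is necessary).} Suppose $\Prob(r_0 \sum_{k=1}^N T_k + Q = r_0) = 1$ for some $r_0 \neq 0$. Then the constant $R \equiv r_0$ satisfies \eqref{eq:SFPE} and has finite moments of every order. Uniqueness in Lemma~\ref{lemma:E+U}---applied with mean $r_0$ if $\alpha \ge 1$, and as the unique nonzero solution if $\alpha < 1$ and $\Prob(Q \ne 0) > 0$---identifies this constant with the solution under study. Hence $\Prob(|R| > t) = 0$ for $t > |r_0|$, giving $K(\beta) = 0$.

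\emph{Hard direction (non-degeneracy is sufficient).} Assume $\Prob(r \sum_{k=1}^N T_k + Q = r) < 1$ for every $r \neq 0$ and, for contradiction, $K(\beta) = 0$. Theorem~\ref{thm:main result} yields $\E |R|^\gamma < \infty$, since $\gamma$ lies below $s_\infty$ in cases (a), (b) and below $\hat{s}_\infty$ in case (c). The idea is to reduce the branching SFPE to a one-variable SFPE along the spine of the associated tree that selects the $k$-th child at every generation. Iterating \eqref{eq:SFPE} $n$ times along this spine produces
\[
R \;=\; L_n\, R_{v_n} + Z_n,
\]
where $L_n = T_k^{(1)} \cdots T_k^{(n)}$ is a product of $n$ independent copies of $T_k$ (so $\E |L_n|^\gamma = (\E |T_k|^\gamma)^n = 1$), $R_{v_n}$ is an iid copy of $R$ independent of $(L_n, Z_n)$, and $Z_n$ collects contributions from sibling subtrees and the accumulated $Q$-increments along the spine. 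Thus $R$ is also a fixed point of the one-variable smoothing transform $R \eqdist L_n R' + Z_n$. Since $\log |L_n|$ is nonlattice (as the $n$-fold iid convolution of the nonlattice $\log |T_k|$), and since assumptions \eqref{B}, \eqref{C}, \eqref{D}, \eqref{D*} propagate to give $\E |L_n|^\gamma \log |L_n| < \infty$ and $\E |Z_n|^\gamma < \infty$, Goldie's implicit renewal theorem \cite{Goldie1991} applies to this one-variable equation; as $\E |R|^\gamma < \infty$, its two tail constants must vanish, which forces
\[
Z_n \;=\; c_n \bigl(1 - L_n\bigr) \quad \text{a.s.}
\]
for some constant $c_n$. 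The recursion $Z_{n+1} = Z_n + L_n Z_1^{(n+1)}$, with $Z_1^{(n+1)}$ an iid copy of $Z_1$ coupled through $\bT_{v_n}$, then gives $c_n = c_1 =: r$ for all $n$ by a one-line induction, whence $R - r = L_n (R_{v_n} - r)$ a.s. Taking $\beta$-th moments---legitimate because $\E |R|^\beta \le (\E |R|^\gamma)^{\beta/\gamma} < \infty$ by Lyapunov---gives
\[
\E |R - r|^\beta \;=\; (\E |T_k|^\beta)^n \, \E |R - r|^\beta .
\]
In the branching setting $\Prob(N \ge 2) > 0$ implicit in \eqref{A}, we have $\E |T_k|^\beta < m(\beta) = 1$, so letting $n \to \infty$ forces $\E |R - r|^\beta = 0$, i.e., $R = r$ a.s. Substituting into \eqref{eq:SFPE} yields $r = r \sum_{k=1}^N T_k + Q$ a.s., contradicting the non-degeneracy hypothesis unless $r = 0$---but then $R = 0$ contradicts that $R$ is the nonzero solution.

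\emph{Homogeneous case and main obstacle.} When $Q \equiv 0$, the equation $r \sum_{k=1}^N T_k + Q = r$ reduces to $r \bigl(\sum_{k=1}^N T_k - 1\bigr) = 0$, equivalent for $r \neq 0$ to $\sum_{k=1}^N T_k = 1$ a.s., which gives the claimed equivalence. The most delicate step is verifying the hypotheses of Goldie's one-variable theorem for the spine reduction---in particular controlling $\E |Z_n|^\gamma$ and the renewal regularity of $\log |L_n|$---both of which follow from \eqref{B}--\eqref{D*} once the recursive description of $Z_n$ is unwound, but this is where most of the technical bookkeeping concentrates.
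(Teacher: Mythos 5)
Your argument is correct and essentially reproduces the paper's proof: the easy direction via uniqueness of the constant solution, and for the converse, isolate the $k$-th weight to obtain the one-variable perpetuity $R \eqdist T_k R_k + B$ with $B = \sum_{j\ne k} T_j R_j + Q$, use Theorem \ref{thm:main result} to get $\E\abs{R}^\gamma < \infty$, and invoke the Kesten--Goldie dichotomy to force the degeneracy $B = r(1-T_k)$ a.s., hence $R\equiv r$ and $r\sum_{k=1}^N T_k + Q = r$ a.s. The only divergence is that your $n$-fold spine iteration and the subsequent $\beta$-moment contraction are redundant: the case $n=1$ is exactly the paper's decomposition (its hypothesis $\E\abs{B}^\gamma<\infty$ follows from Lemma \ref{lemma:Burkholder}, resp.\ subadditivity, whereas for general $n$ you would still owe the deferred verification of $\E\abs{Z_n}^\gamma<\infty$), and once $B=r(1-T_k)$ is known, uniqueness of the solution to the contractive perpetuity already gives $R=r$ without any moment argument.
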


Note that the existence of $\gamma$ is only a mild condition because
$$ \P\left(\max_{1 \le k \le N} \abs{T_k} > 1\right)\ =\ \P(\abs{T_1}>1)\ >\ 0. $$
Namely, if the latter failed to hold, then $m(s)$ would be decreasing function and thus $m(s)<1$ for all $s>\alpha$. But this is impossible as $m(\beta)=1$.

Note further that $\P(r\sum_{k=1}^N T_k+Q = r)< 1$ is obviously necessary for heavy tail behaviour, for otherwise $R \equiv r$ would be the unique solution with $\E R=r$.

\section{Proof of the main theorem}\label{sect:proof1}

We start with two lemmata about holomorphic functions, the first one giving a basic property of the so-called Mellin transform of a measurable function and being proved in the Appendix.

\begin{Lemma}\label{Lemma:holomorph}
Let $f : \R_\ge \to \R$ be a measurable function such that
$$ \int_0^\infty t^{s-1} \abs{f(t)} dt < \infty $$
for $s \in \{\sigma_0, \sigma_1\} \subset \R_>$. Then its Mellin transform
\begin{equation} g(z) := \int_0^\infty t^{z-1} f(t) dt \label{eq:laplace} \end{equation}
is well defined and holomorphic in the strip $\sigma_0 < \Re z < \sigma_1$.
\end{Lemma}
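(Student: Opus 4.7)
The plan is to first show that the integral defining $g$ converges absolutely and uniformly on compact subsets of the strip $\{\sigma_0 < \Re z < \sigma_1\}$, and then deduce holomorphicity by combining Fubini's theorem with Morera's theorem.

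For the first step, I would observe that for $z$ with $\sigma_0 \le \Re z \le \sigma_1$ we have $|t^{z-1}| = t^{\Re z - 1}$, and since $s \mapsto t^{s-1}$ is decreasing on $(0,1]$ (because $\log t \le 0$) and increasing on $[1,\infty)$, the uniform pointwise bound
$$
\bigl|t^{z-1}\bigr|\ \le\ t^{\sigma_0 - 1} + t^{\sigma_1 - 1}, \qquad t > 0,
$$
holds throughout the closed strip. By hypothesis,
$$
\int_0^\infty \bigl(t^{\sigma_0 - 1} + t^{\sigma_1 - 1}\bigr)|f(t)|\,dt\ <\ \infty,
$$
so the integral in \eqref{eq:laplace} is absolutely convergent, and indeed dominated uniformly in $z$ by an integrable function. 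In particular, $g$ is well defined on the strip and, by dominated convergence, continuous there.

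To establish holomorphicity, I would invoke Morera's theorem. Let $\Delta$ be any (closed) triangular contour contained in the open strip. The joint integrability of $(t,z) \mapsto t^{z-1}f(t)$ on $(0,\infty) \times \partial\Delta$ follows from the uniform estimate above together with the compactness of $\partial\Delta$, so Fubini's theorem applies:
$$
\oint_{\partial\Delta} g(z)\,dz\ =\ \int_0^\infty f(t) \oint_{\partial\Delta} t^{z-1}\,dz\,dt\ =\ 0,
$$
since for each fixed $t>0$ the function $z \mapsto t^{z-1} = \exp\bigl((z-1)\log t\bigr)$ is entire, so that the inner contour integral vanishes by Cauchy's theorem. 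Morera's theorem, applied to the continuous function $g$, then yields holomorphicity in the strip.

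The only genuinely delicate point is the uniform estimate on $|t^{z-1}|$, which simultaneously supplies well-definedness, continuity via dominated convergence, and the integrability needed to interchange integrals in the Morera-Fubini step. Once this bound is in place, the remainder of the argument is routine, and no technical obstacles are expected.
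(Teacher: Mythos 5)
Your proposal is correct and follows essentially the same route as the paper's proof: a uniform domination of $|t^{z-1}|$ by $t^{\sigma_0-1}+t^{\sigma_1-1}$ (equivalently, splitting the integral at $t=1$) to get absolute convergence, followed by Fubini plus the entirety of $z\mapsto t^{z-1}$ and Morera's theorem. Your explicit verification of continuity via dominated convergence before invoking Morera is a small point of added care, but the argument is the same.
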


The next lemma, a proof of which may for instance be found in \cite[Theorem II.5b]{Widder1946}, will play a crucial role in the proof of our main result and is historically due to Landau. Its first application in the given context appears in \cite{BDGHU2009}.

\begin{Lemma}\label{Lemma:Landau}
Given the situation of Lemma \ref{Lemma:holomorph}, suppose further that $f$ is monotonic.
Let $\sigma_1:= \sup \{ s>0 \ : \ g(s) < \infty \}$ denote the {\bfseries abscissa of convergence} of $g$ . Then $g$ cannot be extended holomorphically onto any neighborhood of $\sigma_1$.
\end{Lemma}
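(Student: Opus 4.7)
The strategy is the classical Landau argument, adapted from Laplace to Mellin transforms via an appropriate splitting of the integral. First I would reduce to $f \ge 0$ on a tail. Since $f$ is monotonic, $\lim_{t \to \infty} f(t)$ exists in $[-\infty, \infty]$, and a short case analysis on the sign of this limit and the direction of monotonicity shows that, after possibly replacing $f$ by $-f$ (which only changes the sign of $g$ and preserves holomorphy), I may assume $f(t) \ge 0$ for all $t \ge T^*$, where $T^* := \max(1, T_0)$ for some $T_0 > 0$. Next, write $g(z) = g_L(z) + g_R(z)$ with $g_L(z) := \int_0^{T^*} t^{z-1} f(t)\, dt$ and $g_R(z) := \int_{T^*}^\infty t^{z-1} f(t)\, dt$. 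The estimate $t^{s-1} \le (T^*)^{s-\sigma_0} t^{\sigma_0-1}$ on $(0, T^*]$ (valid for $s \ge \sigma_0$) combined with Lemma \ref{Lemma:holomorph} shows that $g_L$ is holomorphic on the half-plane $\{\Re z > \sigma_0\}$, in particular on a full open neighborhood of $\sigma_1$. Consequently, any hypothetical holomorphic extension of $g$ across $\sigma_1$ would transfer to $g_R = g - g_L$.

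Next I would execute Landau's contradiction on $g_R$. Assume $g_R$ extends holomorphically to $B(\sigma_1, \eta)$ for some $\eta > 0$. Pick a real $z_0 \in (\sigma_1 - \eta/2, \sigma_1)$; the domain of holomorphy then contains an open disk $B(z_0, r)$ with $r > \sigma_1 - z_0$, so the Taylor series $g_R(z) = \sum_{n \ge 0} g_R^{(n)}(z_0)(z - z_0)^n/n!$ converges throughout $B(z_0, r)$. Differentiating under the integral sign gives
\[ g_R^{(n)}(z_0) = \int_{T^*}^\infty (\log t)^n\, t^{z_0-1} f(t)\, dt \ge 0, \]
the non-negativity being guaranteed because both $f \ge 0$ and $\log t \ge 0$ on $[T^*, \infty)$. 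Picking a real $z$ with $\sigma_1 < z < z_0 + r$, each term of the Taylor series at $z$ is non-negative, so Tonelli's theorem justifies interchanging sum and integral:
\[ g_R(z) = \int_{T^*}^\infty t^{z_0-1} f(t) \sum_{n \ge 0} \frac{((z-z_0)\log t)^n}{n!}\, dt = \int_{T^*}^\infty t^{z-1} f(t)\, dt < \infty. \]
Combined with $g_L(z) < \infty$ this produces $g(z) < \infty$ at a point $z > \sigma_1$, contradicting the definition of $\sigma_1$ as the abscissa of convergence.

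The main obstacle, and the reason monotonicity is essential, lies in the sign control of the Taylor coefficients. Since $(\log t)^n$ changes sign at $t = 1$, a direct Taylor expansion of $g$ itself would produce coefficients of mixed sign and Tonelli could not be applied; the monotonicity hypothesis is used precisely to arrange $f \ge 0$ on $[T^*, \infty)$ with $T^* \ge 1$, so that both $f$ and $\log t$ are non-negative on the relevant range and the sum--integral interchange that drives the contradiction becomes legitimate.
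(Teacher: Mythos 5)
Your proof is correct and is precisely the classical Landau singularity argument that the paper invokes by citing \cite[Theorem II.5b]{Widder1946} rather than proving the lemma itself: split off the part of the integral near the origin (holomorphic on the whole half-plane $\Re z>\sigma_0$), expand the remaining piece in a Taylor series at a real point $z_0$ just left of the abscissa, use positivity of the coefficients to apply Tonelli, and contradict the definition of $\sigma_1$. The one point specific to the Mellin (as opposed to Laplace) setting — the sign change of $\log t$ at $t=1$ — is exactly what your splitting at $T^*\ge 1$ handles, and your reduction to $f\ge 0$ on a tail is sound (in fact, since $f$ is monotone and absolute convergence of $g(\sigma_1)$ forces $f(t)\to 0$, $f$ has constant sign on all of $[0,\infty)$).
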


Defining $G(s):= \E \abs{R}^s$ and $\sigma:= \sup\{s>0:G(s)<\infty\}$, we have as an immediate consequence:

\begin{Cor}\label{Cor:Landau}
The function $G$ cannot be extended holomorphically onto any neighbourhood of $\sigma$.
\end{Cor}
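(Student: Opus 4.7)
The plan is to reduce this corollary directly to Landau's theorem (Lemma \ref{Lemma:Landau}). The bridge is the classical layer-cake identity: by Fubini and a change of variables, for real $s>0$,
\[
G(s) \;=\; \E|R|^s \;=\; s\int_0^\infty t^{s-1}\,\P(|R|>t)\,dt.
\]
Setting $f(t):=\P(|R|>t)$ and $g(z):=\int_0^\infty t^{z-1}f(t)\,dt$, this reads $G(s)=s\,g(s)$ on the positive real axis. By Lemma \ref{Lemma:holomorph}, $g$ is holomorphic on any vertical strip $\{\sigma_0<\Re z<\sigma_1\}$ in which the Mellin integral converges at real $\sigma_0$ and $\sigma_1$; in particular, the identity $G(z)=z\,g(z)$ propagates by analytic continuation and holds as an identity of holomorphic functions on the strip $0<\Re z<\sigma$.

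Next I would identify the abscissa of convergence of $g$ with $\sigma$. Since $s>0$ is a nonzero finite factor, $G(s)<\infty$ if and only if $g(s)<\infty$, so $\sup\{s>0:g(s)<\infty\}=\sigma$. The function $f$ is nonnegative and monotonically nonincreasing, so Lemma \ref{Lemma:Landau} applies and gives that $g$ cannot be extended holomorphically onto any neighbourhood of $\sigma$.

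Finally, I would argue by contradiction. Suppose $G$ admits a holomorphic extension to some neighbourhood $U$ of $\sigma$. Since $\sigma\ge\alpha>0$ by Lemma \ref{lemma:E+U}, after shrinking $U$ we may assume $0\notin U$. Then $z\mapsto G(z)/z$ is holomorphic on $U$ and agrees with $g$ on $U\cap\{0<\Re z<\sigma\}$, so it provides a holomorphic extension of $g$ across $\sigma$, contradicting the previous paragraph. I do not anticipate any serious obstacle here; the only point requiring care is the translation between the real abscissa of convergence used in Landau's theorem and the complex holomorphic extension of $G$, which is handled by Lemma \ref{Lemma:holomorph} together with the trivial fact that division by the nonvanishing factor $z$ preserves holomorphy off the origin.
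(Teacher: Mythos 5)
Your argument is correct and is exactly the "immediate consequence" the paper has in mind: writing $G(s)=s\int_0^\infty t^{s-1}\P(|R|>t)\,dt$ identifies $G(z)/z$ with the Mellin transform of the monotone function $t\mapsto\P(|R|>t)$, whose abscissa of convergence is $\sigma$, so Lemma \ref{Lemma:Landau} applies. The care you take with the factor $z$ (shrinking the neighbourhood away from the origin, legitimate since $\sigma\ge\beta>0$) is exactly the only point that needed checking.
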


Put
\begin{equation}\label{eq:Ks} 
K(s)\ :=\ \int_0^\infty \left( \P(\abs{R} > t) - \sum_{k=1}^\infty \P(\abs{T_k R_k} > t) \right) t^{s-1}\ dt
\end{equation}
and suppose that \eqref{C} is valid. In order to show with the help of Lemma \ref{Lemma:holomorph} that $K$ has a holomorphic extension onto a neighborhood of $\beta$, the following proposition is crucial. Its proof will be given in Section \ref{sect:bounds}.

\begin{Prop}\label{Prop:moments}
Assuming  \eqref{A}, \eqref{B} and $\sigma\ge\beta$, it follows that
$K(\sigma +\delta) < \infty$ for some $\delta>0$ provided that, furthermore,
\begin{itemize}
\item\eqref{C} holds true and $\sigma < s_\infty$ if $\sigma>1$,
\item\eqref{C}, \eqref{D*} hold true and $\sigma < s_\infty$ if $\sigma=1$,
\item\eqref{D} holds true and $\sigma < \hat{s}_\infty$ if $\sigma<1$.
\end{itemize}
\end{Prop}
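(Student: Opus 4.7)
The goal is to establish
\[
\int_0^\infty \left|\P(|R|>t) - \sum_{k\ge 1}\P(|T_kR_k|>t)\right|\,t^{s-1}\,dt<\infty
\]
for $s = \sigma + \delta$ with some $\delta>0$; by Lemma \ref{Lemma:holomorph} this then permits a holomorphic extension of the Mellin transform defining $K$ across $\sigma\ge\beta$. My plan is to work with the strong form $R = \sum_{k=1}^{N}T_kR_k + Q$ of the SFPE and introduce the shorthands $M := \max_{k\ge 1}|T_kR_k|$ and $N_t := \sum_k\1_{\{|T_kR_k|>t\}}$. The identities $\E N_t - \P(M>t) = \E(N_t-1)^+$ and $\P(|R|>t) - \P(M>t) = \P(|R|>t,M\le t) - \P(M>t,|R|\le t)$ yield the pointwise majorization
\[
\left|\P(|R|>t) - \sum_k\P(|T_kR_k|>t)\right| \le \P(|R|>t,M\le t) + \P(M>t,|R|\le t) + \E(N_t-1)^+,
\]
reducing the task to finiteness of three integrals against $t^{s-1}\,dt$.

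For the multi-count term, using $(N_t-1)^+\le\binom{N_t}{2}$ reduces matters to a sum of pairwise-tail integrals. Conditioning on $\bT$ factorises $\P(|T_jR_j|>t,|T_kR_k|>t)$ into $\P(|R|>t/|T_j|)\P(|R|>t/|T_k|)$, and a double Markov bound using $\E|R|^r<\infty$ for an $r\in(s/2,\sigma)$—admissible since $\delta$ may be chosen below $\sigma$—combined with a truncation at $t\sim|T_jT_k|^{1/2}$ integrates to $C\,|T_jT_k|^{s/2}$ per pair. Summing over $j\ne k$ then leaves $\E(\sum_k|T_k|^{s/2})^2 - m(s)$ to be controlled: this is finite under \eqref{C} when $s/2\ge 1$ via $\sum_k a_k^{s/2}\le(\sum_k a_k)^{s/2}$ for nonnegative $a_k$, and under \eqref{D} or \eqref{D*} in the subadditive regime $s/2<1$ by bounding the second moment of $\sum_k|T_k|^{s/2}$ through $m_{\epsilon_0}$ together with Hölder-type interpolations.

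The under- and over-count terms exploit the fact that on $\{M\le t\}$ every summand $|T_kR_k|$ is bounded by $t$. After separating off $\{|Q|>t/2\}$, which is integrable against $t^{s-1}\,dt$ by \eqref{B}, I apply Markov with a suitable moment $p$ to the truncated sum $\sum_k T_kR_k\1_{\{|T_kR_k|\le t\}}$, bounding its $p$-th moment via a Rosenthal/BDG inequality (Lemma \ref{lemma:Burkholder} when $p\ge 1$, subadditivity \eqref{eq:subadditive} when $p\le 1$). The pointwise estimate $(|T_kR_k|\wedge t)^q\le|T_kR_k|^{r'}t^{q-r'}$ valid for $r'<\sigma$ converts these moment bounds into powers of $t$ strictly smaller than $p$, which combined with the Markov factor $t^{-p}$ and the weight $t^{s-1}$ yields an integrable bound near infinity. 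The symmetric term $\P(M>t,|R|\le t)$ is handled analogously by decomposing on which $|T_kR_k|$ exceeds $t$ and controlling the compensating remainder.

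\textbf{Main obstacle.} The delicate calibration is in the preceding step: one must match the Markov exponent $p$, the auxiliary moment $r'<\sigma$, and the integration weight $t^{s-1}$ so that the combined estimate decays strictly faster than $t^{-1}$ in $t$, using only the finiteness $\E|R|^{r'}<\infty$ for $r'<\sigma$ and the appropriate moment condition on $\bT$. The Rosenthal regime that applies depends on whether $s$ lies above, below, or at $1$, which is precisely why the proposition splits into three cases: \eqref{C} covers $s>1$ (using $\mu$), \eqref{D} covers $s<1$ (using $m_\epsilon$), and both \eqref{C} and \eqref{D*} are needed at the boundary $\sigma=1$ because $s=\sigma+\delta$ straddles the $L^1$/$L^p$ divide in the moment inequalities.
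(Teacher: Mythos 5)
Your opening decomposition is sound: with $M:=\sup_k|T_kR_k|$ and $N_t:=\sum_k\1_{\{|T_kR_k|>t\}}$, the identity $\E N_t-\P(M>t)=\E(N_t-1)^+$ and the triangle inequality do reduce matters to three integrals, and integrating $\E(N_t-1)^+$ against $st^{s-1}dt$ gives exactly the quantity $J(s)=\E[\sum_k|T_kR_k|^s-\sup_k|T_kR_k|^s]$ that the paper isolates (its decomposition $K(s)\le H(s)+I(s)+2J(s)$ is the moment-level version of yours). The problems lie in how you then bound the pieces.

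First, the multi-count term. Bounding $(N_t-1)^+$ by $\tbinom{N_t}{2}$ and running the pairwise Markov/truncation computation leads, as you say, to $\E\bigl(\sum_k|T_k|^{s/2}\bigr)^2$. When $s/2\ge 1$ this is $\le\E(\sum_k|T_k|)^s=\mu(s)$ and \eqref{C} suffices, but when $s/2<1$ the subadditivity inequality points the wrong way and $\sum_k|T_k|^{s/2}$ can be much larger than $(\sum_k|T_k|)^{s/2}$; neither \eqref{D} nor \eqref{D*} controls a \emph{second} moment of $\sum_k|T_k|^{s'}$, only a $(1+\epsilon_0)$-moment, and only for $s'$ in a small interval at $\beta$, not at $s/2\approx\sigma/2$. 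An asymmetric split $|T_j|^{a}|T_k|^{b}$ with $a+b=s$ does not rescue this: pushing $b$ toward $0$ turns $\sum_k|T_k|^b$ into $N$ and reintroduces moment conditions on $N$, which the hypotheses deliberately avoid. The paper's route (following Lemma 4.6 of Jelenkovic--Olvera-Cravioto) bounds $J(s)$ by a constant times $(\E|R|^{\sigma-\delta})^{s/(\sigma-\delta)}\,\E(\sum_k|T_k|^{\sigma-\delta})^{s/(\sigma-\delta)}$, i.e.\ an exponent just above $1$ applied to a sum with inner exponent just below $\sigma$ --- exactly the shape of \eqref{D}/\eqref{D*}. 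Your "H\"older-type interpolations" do not bridge this gap.

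Second, the under- and over-count terms $\P(|R|>t,M\le t)$ and $\P(M>t,|R|\le t)$ are where essentially all of the work of the proposition lives, and your sketch defers it: you yourself label the calibration of the Markov exponent $p$, the auxiliary moment $r'<\sigma$, and the weight $t^{s-1}$ as the "main obstacle" without resolving it. Note for instance that a naive $p=2$ truncation bound gives $t^{-2}\sum_k\E_{\bT}(|T_kR_k|\wedge t)^2\le t^{-r'}m(r')\E|R|^{r'}$, which needs $r'>s$ to be integrable against $t^{s-1}$ at infinity --- impossible since $r'<\sigma<s$; one must instead subtract the compensator/maximum before applying any moment bound. The paper handles this by passing to the moment differences $H(s)$ and $I(s)$ and invoking the smoothness lemmata of the Appendix (Lemma \ref{lemma:c1functions} with a carefully chosen H\"older exponent $\zeta$, Lemma \ref{lemma:c2functions} for $\sigma\ge 2$), with a four-way case split on $\sigma<1$, $=1$, $\in(1,2)$, $\ge 2$; this is also where the simultaneous need for \eqref{C} and \eqref{D*} at $\sigma=1$ actually arises. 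As it stands, your argument establishes the reduction but not the estimates, and the one estimate you do carry out in detail (the pairwise bound) requires hypotheses stronger than those of the proposition.
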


\begin{proof}[Proof of Theorem \ref{thm:main result}]
Our proof consists of two steps (tacitly assuming the respective assumptions of the theorem for the cases $\beta>,=,<1$):
\begin{itemize}
\item[\sc Step 1.] $K(\beta)=0$ iff $\sigma>\beta$.
\item[\sc Step 2.] If $K(\beta)=0$ and $\sigma>\beta$, then $\sigma=s_{\infty}$, resp.\ $=\hat{s}_{\infty}$
\end{itemize}

Before proceeding with these steps, we make the following observation (under the assumptions of the theorem):
Lemmata \ref{lemma:E+U} and \ref{lemma:Burkholder} ensure that $\E \sum_{k=1}^N \abs{T_k R_k}^s$ and $\E \abs{R}^{s}$ are both finite for $ \alpha <s <\beta$. Therefore, we may compute
\begin{align*}
K(s)\ &=\ \int_0^\infty \left( \P(\abs{R}>t) - \sum_{k=1}^\infty \P(\abs{T_k R_k}>t)  \right) t^{s-1}\ dt \notag \\
&=\ \int_0^\infty \P(\abs{R}>t) t^{s-1}\ dt\ -\ \int_0^\infty \sum_{k=1}^\infty \P(\abs{T_k R_k}>t)  t^{s-1}\ dt \notag \\
&=\ \frac1s \E \abs{R}^s - \frac1s \Erw{ \sum_{k=1}^N \abs{T_k R_k}^s} \notag \\
&=\ \frac1s(1-m(s))\,\E \abs{R}^s\notag
\end{align*}
giving
\begin{equation}\label{eq:identity} 
\frac{K(s)}{1-m(s)}\ =\ G(s)
\end{equation}
for all $s \in (\alpha, \beta)$. By Lemma \ref{Lemma:holomorph} $($with $f(t)=\P(\abs{R}>t)+\sum_{k=1}^\infty \P(\abs{T_k R_k}>t))$ and Lemma \ref{Lem:mholomorph}, both sides extend to holomorphic functions onto the strip $\alpha < \Re z < \beta$, and \eqref{eq:identity} remains valid in this strip by the identity theorem for holomorphic functions.

\vspace{.2cm}
\textsc{Step 1}. Notice that $(1-m(z))^{-1}$ has a pole of order 1 at $\beta$, for $m'(\beta)>0$, but is holomorphic otherwise in a neighborhood of $\beta$. By Proposition \ref{Prop:moments} and Lemma \ref{Lemma:holomorph}, $K(z)$ is holomorphic in the strip $\alpha<\Re z < \beta+\delta$ for some $\delta>0$. Hence, if $K(\beta)=0$, then the left-hand side (LHS) of \eqref{eq:identity} has a holomorphic extension to a neighborhood of $\beta$, and this is also an extension of the RHS, giving $\sigma>\beta$. On the other hand, $K(\beta)>0$ entails $G(\beta)=\infty$, i.e.\ $\sigma\le\beta$.

\vspace{.2cm}
\textsc{Step 2}. Now assume $K(\beta)=0$ and $\sigma>\beta$, but $ \sigma < s_\infty$, resp. $\sigma < \hat{s}_\infty$. Then, for all $\beta < \Re z < \sigma$, we have
\begin{equation*} \frac{K(z)}{1-m(z)}\ =\ G(z) ,
\end{equation*}
whence by another appeal to Proposition \ref{Prop:moments} together with Lemmata \ref{Lem:mholomorph} and \ref{Lemma:holomorph}, the LHS extends holomorphically onto $\beta < \Re z < \sigma + \delta$ for some $\delta >0$, giving an holomorphic extension of the RHS. But this is a contradiction to Corollary \ref{Cor:Landau}.
\end{proof}

We finish this section with the proof of Proposition \ref{prop:sufficient condition}.

\begin{proof}[Proof of Proposition \ref{prop:sufficient condition}]
First of all, if $K(\beta)>0$, then the uniqueness of $R$ as a solution to \eqref{eq:SFPE}
implies that $\P(r\sum_{k=1}^N T_k+Q =r)<1$ for any $r\ne 0$. In order to show the converse, suppose that $K(\beta)=0$ and thus $\E|R|^{s}<\infty$ for any $s<s_{\infty}$, resp.\ $<\hat{s}_{\infty}$. W.l.o.g. let $k=1$, so that $\E\abs{T_{1}}^{\gamma}=1$ is assumed. Putting $B:= \sum_{k=2}^N T_k R_k+Q$, the random variable $R$ satisfies the SFPE
\begin{equation}\label{eq:perpetuity} 
R\ \eqdist\ T_1 R_1 + B.
\end{equation}
Since $\E \abs{R}^\gamma < \infty$, $m(\gamma)<\infty$, and (if $\gamma >1$)
$\mu(\gamma) < \infty$, we find that the following conditions are fulfilled:
\begin{itemize}
\item[] $\E \abs{B}^\gamma < \infty$ (by Lemma \ref{lemma:Burkholder});
\item[] $\E \abs{T_1}^\gamma = 1$;
\item[] $\P(\log \abs{T_1} \in \cdot)$ is nonarithmetic;
\item[] $\E \abs{T_1}^\gamma \log^+ \abs{T_1} < \infty$.
\end{itemize}
These conditions render uniqueness of $R$ as a solution to \eqref{eq:perpetuity} and allow to invoke the results by Kesten \cite[Theorem 5]{Kesten1973} and Goldie \cite[Theorem 4.1]{Goldie1991} to infer that $\E|R|^{\gamma}<\infty$ and thus $t^{\gamma}\,\P(|R|>t)=o(1)$ as $t\to\infty$ can only hold if
$$ T_1 r + B = r\quad \text{a.s.\ for some } r \in \R  $$
or, equivalently, $R=r$ a.s. (by uniqueness) which in turn is equivalent to
$$ r\sum_{k=1}^N T_k +Q= r\quad \text{a.s.} $$
This completes our proof of the proposition.
\end{proof}

\section{Bounds for \emph{\bfseries K(s)}}\label{sect:bounds}
We proceed to a proof of Proposition \ref{Prop:moments}. This proof with $r=\beta$ and $(\cdot)^\pm$ instead of $\abs{\cdot}$ also shows the finiteness of \eqref{E1} and \eqref{E2}, thus completing the argument in the proof of Theorem \ref{thm:implicitrnw}.


\begin{proof}[Proof of Proposition \ref{Prop:moments}]
By using \cite[Lemma 9.4]{Goldie1991} (in corrected form), and upon defining
\begin{align*}
H(s)\ &:=\ \E \abs{ \abs{\sum_{k=1}^N T_k R_k +Q}^s- \abs{\sum_{k=1}^N T_k R_k}^s},\\
I(s)\ &:=\ \E \abs{\abs{\sum_{k=1}^N T_k R_k }^s - \sum_{k=1}^N \abs{T_k R_k}^s},\\
J(s)\ &:=\ \Erw{ \sum_{k=1}^N \abs{T_k R_k }^s - \sup_{1 \le k \le N} \abs{T_k R_k}^s},
\end{align*}
we obtain the following estimate for $K(s)$: 
\begin{align*}
K(s)\ &=\ \int_0^\infty s t^{s-1}\left|\P\left( \abs{\sum_{k=1}^N T_k R_k+Q}>t\right) - \sum_{k=1}^\infty \P(\abs{T_k R_k}>t)\right|\ dt \\
&\le\ \int_0^\infty s t^{s-1} \abs{\P\left(\abs{\sum_{k=1}^N T_k R_k+Q}>t\right) - \P( \abs{\sum_{k=1}^N T_k R_k}>t)}\ dt \\
&\quad+\ \int_0^\infty s t^{s-1} \abs{\P\left(\abs{\sum_{k=1}^N T_k R_k}>t\right) -  \P\left(\sup_{1 \le k \le N}\abs{T_k R_k}>t\right)}\ dt \\
&\quad +\ \int_0^\infty s t^{s-1}\left(\sum_{k=1}^\infty \P(\abs{T_k R_k}>t) - \P\left(\sup_{1 \le k \le N}\abs{T_k R_k} > t\right)\right)\ dt \\
&=\ H(s)\ +\ \E \abs{ \abs{\sum_{k=1}^N T_k R_k }^s - \sup_{1 \le k \le N} \abs{T_k R_k}^s}\ +\ J(s) \\
&\le\ H(s)+I(s)+2J(s).
 \end{align*}
As for the second to last line, we note that the appearing integrand is indeed nonnegative because it is equal to $st^{s-1}\sum_{k\ge 2}\Prob(Y_{k}>t)$ where $(Y_{k})_{k\ge 1}$ denotes the decreasing order statistic of $(|T_{k}R_{k}|)_{k\ge 1}$. Then use Fubini's theorem as in \cite[Lemma 4.6]{JO2010a} to see that the pertinent integral equals $J(s)$.
The proof is completed by the next three lemmata which will show that, for some $\delta>0$, $H(s)$, $I(s)$ and $J(s)$ are bounded for all $\sigma<s<\sigma+\delta$.
\end{proof}

\begin{Lemma}\label{Lemma:qbound} Suppose that \eqref{B} holds and $\sigma\ge\beta$. If  $\sigma\ge 1$, suppose further \eqref{C} be true and $\sigma < s_\infty$. Then
$$ H(s):= \E \abs{ \abs{\sum_{k=1}^N T_k R_k +Q}^s -  \abs{\sum_{k=1}^N
T_k R_k }^s} < \infty $$ 
for all $\sigma \leq s < \sigma + \delta$ and some $\delta>0$.
\end{Lemma}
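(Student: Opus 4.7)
My plan is to split into the regimes $s\le 1$ and $s>1$, apply the standard pointwise inequalities for differences of $s$-th powers, and then exploit the hypothesis $\sigma<s_\infty$ via Hölder's inequality. Throughout, set $A:=\sum_{k=1}^{N}T_kR_k$, so that $H(s)=\E\bigl||A+Q|^s-|A|^s\bigr|$.

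If $\sigma<1$ I can pick $\delta>0$ with $\sigma+\delta\le 1$; subadditivity of $x\mapsto x^s$ on $[0,\infty)$ yields $\bigl||a+b|^s-|a|^s\bigr|\le|b|^s$, so $H(s)\le\E|Q|^s$, which is finite for all $s<s_1$ by \eqref{B}. This disposes of the case $\sigma<1$ at once.

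For $\sigma\ge 1$ (so $s>1$ except at the trivial endpoint $s=1$, where $H(1)\le\E|Q|<\infty$), I use the mean value theorem applied to $t\mapsto|a+tb|^s$, obtaining
$$\bigl||a+b|^s-|a|^s\bigr|\ \le\ s(|a|+|b|)^{s-1}|b|\ \le\ s\,2^{s-1}\bigl(|a|^{s-1}|b|+|b|^s\bigr),$$
so that
$$H(s)\ \le\ s\,2^{s-1}\bigl(\E[|A|^{s-1}|Q|]+\E|Q|^s\bigr).$$
The term $\E|Q|^s$ is finite because $s<\sigma+\delta<s_\infty\le s_1$ for $\delta$ small (the last inequality follows since $\mu\ge m$ on $[1,\infty)$, so $\D_\mu\subseteq\D$ for arguments $\ge 1$ and hence $s_\infty\le s_1$). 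For the cross term I apply Hölder's inequality with conjugate exponents $p,q>1$:
$$\E[|A|^{s-1}|Q|]\ \le\ \bigl(\E|A|^{(s-1)p}\bigr)^{1/p}\bigl(\E|Q|^q\bigr)^{1/q}.$$
Setting $\varrho:=(s-1)p$, the first factor is finite as soon as $\varrho<\sigma$ (so $\E|R|^\varrho<\infty$ by definition of $\sigma$) and $\varrho<s_\infty$ (so $\mu(\varrho)<\infty$), because $|A|\le\sum_{k=1}^{N}|T_kR_k|$ together with \eqref{eq:bound for ER^s} then gives $\E|A|^\varrho\le C\max\{\mu(\varrho),m(\varrho)\}\E|R|^\varrho<\infty$. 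The second factor is finite provided $q<s_\infty\le s_1$.

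The only delicate point is the simultaneous choice of exponents: one must produce $p,q>1$ meeting $(s-1)p<\sigma$ and $q<s_\infty$ for every $s\in[\sigma,\sigma+\delta)$. Fix $\eta\in(0,\sigma\wedge 1)$ small enough that $(\sigma-\eta)/(1-\eta)<s_\infty$, which is possible because this ratio tends to $\sigma<s_\infty$ as $\eta\to 0^{+}$. Then set $\varrho:=\sigma-\eta$, $p:=\varrho/(s-1)$, and $q:=p/(p-1)=\varrho/(\varrho-s+1)$. By construction $(s-1)p=\varrho<\sigma$, while $q\to(\sigma-\eta)/(1-\eta)<s_\infty$ and $p\to\varrho/(\sigma-1)>1$ as $s\to\sigma^{+}$, so $\delta>0$ can be taken small enough that $p>1$ and $q<s_\infty$ hold throughout $[\sigma,\sigma+\delta)$. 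This bookkeeping of exponents near the boundary $s=\sigma$ — rather than any new inequality — is the main obstacle; once it is settled, the bound on $H(s)$ follows immediately.
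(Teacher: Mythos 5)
Your proof is correct, and its skeleton matches the paper's: split at $s=1$, use subadditivity for $s\le 1$, and for $s>1$ control the power difference and finish with Lemma \ref{lemma:Burkholder} plus H\"older. The one genuine difference is the elementary inequality used in the main case. You take the mean-value/Lipschitz bound $\bigl||a+b|^s-|a|^s\bigr|\le s(|a|\vee|b|)^{s-1}|a+b-a|$, which leaves the full first power on $Q$ and the exponent $s-1$ on the weighted sum; to make $\E[|A|^{s-1}|Q|]$ finite you must then re-inflate $s-1$ to some $\varrho<\sigma$ via H\"older, and this is what forces the $(\eta,p,q)$ bookkeeping at the end (keeping $(s-1)p<\sigma$, $p>1$ and $q<s_\infty$ simultaneously for all $s\in[\sigma,\sigma+\delta)$). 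The paper instead uses the H\"older-type difference inequality $|a^s-b^s|\le s(a\vee b)^{s-\delta}|a-b|^{\delta}$, which places the exponent $s-\delta$ on $\sum_k T_kR_k$ --- automatically below $\sigma$ because $s<\sigma+\delta$ --- and only a $\delta$-power on $Q$, so the subsequent H\"older application with exponents $s/(s-\delta)$ and $s/\delta$ requires no further tuning. Your route is slightly more laborious but equally valid; note only that in the subcase $\sigma<1$ you should justify $\E|Q|^s<\infty$ for $s$ up to $\sigma+\delta$ via \eqref{eq:Q-moments} (i.e.\ $\sigma<\hat{s}_\infty$, as assumed in Proposition \ref{Prop:moments}) rather than via $s<s_1$ alone, exactly as the paper does.
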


\begin{proof}
Choose $\delta\in (0,1]$ such that $\sigma+\delta<s_{\infty}$. If $s\le 1$, then (recalling \eqref{eq:Q-moments})
$$ H(s)\ \leq\ \E |Q|^s\ <\ \infty. $$ 
If $1<s<\sigma+\delta $, use the inequalities
\begin{align*}
|a^s - b^s|\ &\leq\ s(a\vee b)^{s-\delta}|a-b|^{\delta},\\
(a+b)^s\ &\leq\ 2^{s-1}(a^s+b^s),
\end{align*}
valid for $a,b\geq 0$, to infer (with $a=|\sum_{k=1}^{N}T_{k}R_{k}+Q|$ and $b=|\sum_{k=1}^{N}T_{k}R_{k}|$)
$$ H(s)\ \leq\ s(1\vee 2^{s-\delta -1})\,\Erw{|Q|^s+\abs{\sum_{k=1}^N
T_k R_k }^{s-\delta}|Q|^{\delta}}. $$ 
The last expectation is finite because, by Lemma \ref{lemma:Burkholder} and H\"older's inequality,
\begin{align*}
\Erw{\abs{\sum_{k=1}^N T_k R_k }^{s-\delta}|Q|^{\delta}}\ &\leq\ C\,\E
|R|^{s-\delta}\,\Erw{\abs{\sum_{k=1}^N |T_k|}^{s-\delta}
|Q|^{\delta}}\\
&\leq\ C\,\mu(s)^{(s-\delta)\slash s} (\E |Q|^s)^{\delta\slash s}
\end{align*}
for some constant $C\in\R_{>}$
\end{proof}

 \begin{Lemma}\label{Lemma:supbound} 
 Let $\sigma\ge\beta$. Suppose that \eqref{C} holds and $\sigma< s_\infty$ if $\sigma>1$, that \eqref{D*} holds if $\sigma=1$, and that \eqref{D} holds and $\sigma<\hat{s}_\infty$ if $\sigma<1$. Then $J(s)<\infty$ for all $0<s <\sigma+\delta$ and some $\delta>0$.
\end{Lemma}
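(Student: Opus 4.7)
My plan is to reduce $J(s)$ to a sum over pairs of distinct indices $j\neq k$, thereby replacing the divergent diagonal moment $\E|R|^s$ (unavailable for $s\ge\sigma$) by a product of two $R$-moments of order strictly less than~$\sigma$, both finite by definition of~$\sigma$.

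I would begin with a layer-cake identity. Setting $\nu(t):=|\{k:|T_kR_k|>t\}|$, one has $J(s)=s\int_0^\infty t^{s-1}\E[(\nu(t)-1)^+]\,dt$. The combinatorial inequality $(n-1)^+\le\tfrac12 n(n-1)$ for $n\in\N$ together with $n(n-1)=\sum_{j\ne k}\1_{|T_jR_j|>t}\1_{|T_kR_k|>t}$ and Fubini (as in \cite[Lemma~4.6]{JO2010a}) yield
\[
J(s)\ \le\ \tfrac12\sum_{j\ne k}\E\min(|T_jR_j|,|T_kR_k|)^s.
\]
For the second step, I use the trivial bound $\min(x,y)^s\le x^ay^b$ for any split $s=a+b$ with $a,b\ge 0$, together with the independence of $R_j,R_k$ for $j\ne k$ (iid copies of $R$, independent of $\bT$), to factorise
\[
J(s)\ \le\ \tfrac12\,\E|R|^a\,\E|R|^b\,\E\Bigl(\sum_j|T_j|^a\Bigr)\Bigl(\sum_k|T_k|^b\Bigr).
\]
Choosing $a,b\in(s-\sigma,\sigma)$---a range that is nonempty once $s<2\sigma$, in particular for $s$ close to~$\sigma$---makes both $R$-moments finite.

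The third and final step is case-work on the $\bT$-expectation. If $\sigma\ge 2$ under~\eqref{C}, take $a=b=s/2\ge 1$ so that $\sum|T_k|^{s/2}\le(\sum|T_k|)^{s/2}$ and the product is dominated by $(\sum|T_k|)^s$ with $\mu(s)<\infty$. If $\sigma=1$ under~\eqref{D*}, the split $a=b=s/2$ lies in $[\beta-\delta_0,1]\subset\D_{\eps_0}$ for $s$ close enough to~$1$, and Hölder's inequality with exponent $1+\eps_0$ bounds the product via $m_{\eps_0}$. If $\sigma<1$ under~\eqref{D}, argue analogously with $a,b$ chosen so both land in $[\beta-\delta_0,\beta]\subset\D_{\eps_0}$.

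The main obstacle is the regime $1<\sigma<2$ under only~\eqref{C}, where $s\in(\sigma,\sigma+\delta)\subset(1,2)$ forces the symmetric split $s/2<1$, while the inequality $\sum|T_k|^r\le(\sum|T_k|)^r$ holds only for $r\ge 1$, and~\eqref{C} offers no $m_\eps$-type bound. My plan here is to use an asymmetric split with $a\in[1,\sigma)$ so that $\sum|T_j|^a\le(\sum|T_j|)^a$ has $p$th moment $\mu(ap)<\infty$ for $p$ slightly above~$1$ (feasible because $a<\sigma<s_\infty$); an application of Hölder's inequality with conjugate exponent $q=p/(p-1)$ then reduces matters to the $q$th moment of $\sum|T_k|^{s-a}$, which is handled by a further interpolation exploiting $s-a<1$ and the slack $s<s_\infty$ left by~\eqref{C}.
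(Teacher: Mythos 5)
Your reduction of $J(s)$ to the off-diagonal sum $\tfrac12\sum_{j\ne k}\E\min(|T_jR_j|,|T_kR_k|)^s$ is correct (it is the same layer-cake identity the paper quotes, with $(\nu-1)^+$ bounded by the pair count), and the factorisation $\min(x,y)^s\le x^a y^b$ is a legitimate way to trade the unavailable moment $\E|R|^s$ for the finite moments $\E|R|^a\,\E|R|^b$. The case $\sigma\ge 2$ under \eqref{C} does close this way. But the remaining cases do not, and the obstruction is structural: your pairing produces the expectation of a \emph{product of two weight sums}, $\E\bigl[\bigl(\sum_j|T_j|^a\bigr)\bigl(\sum_k|T_k|^b\bigr)\bigr]$, which is a second-moment-type quantity in the weights, whereas the hypotheses only supply $(1+\epsilon_0)$-moments of a single sum (conditions \eqref{D}, \eqref{D*}) or moments of $\sum_k|T_k|$ itself (condition \eqref{C}).

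Concretely: for $\sigma=1$ the claim that $a=b=s/2$ lies in $[\beta-\delta_0,1]$ is false in general ($s/2\approx 1/2$ while $\delta_0$ may be tiny), and even granting membership, H\"older with exponent $1+\epsilon_0$ on one factor forces the conjugate exponent $(1+\epsilon_0)/\epsilon_0$ on the other, which blows up as $\epsilon_0\to 0$; bounding $\bigl(\sum_k|T_k|^{s/2}\bigr)^2$ by $m_{\epsilon_0}(s/2)$ would need $\epsilon_0\ge 1$. The same objection applies for $\sigma<1$. For $1<\sigma<2$ your asymmetric split leaves a factor $\E\bigl(\sum_k|T_k|^{s-a}\bigr)^q$ with $s-a<1$ and $q=p/(p-1)$ large; condition \eqref{C} gives no control over $\sum_k|T_k|^b$ for $b<1$ (if $|T_k|=c/N$ for $k\le N$, then $\sum_k|T_k|$ is bounded while $\sum_k|T_k|^b=c^bN^{1-b}$ is unbounded in $N$), so no further interpolation can rescue this step. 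The paper avoids the product entirely by following \cite[Lemma 4.6]{JO2010a}: with $Y_k$ the decreasing order statistics of $(|T_kR_k|)$ one uses $\sum_{k\ge 2}Y_k^s\le Y_2^{\,s-\gamma}\sum_{k\ge 2}Y_k^\gamma\le\bigl(\sum_kY_k^\gamma\bigr)^{s/\gamma}$ with $\gamma=\sigma-\delta$, which after conditioning on $\bT$ requires only a moment of order $s/\gamma$ slightly above $1$ of the \emph{single} sum $\sum_k|T_k|^{\sigma-\delta}$ --- exactly what \eqref{D}/\eqref{D*} provide when $\sigma\le 1$, and what $\sum_k|T_k|^{\sigma-\delta}\le\bigl(\sum_k|T_k|\bigr)^{\sigma-\delta}$ together with \eqref{C} provides when $\sigma-\delta\ge 1$. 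You would need to replace the pairwise bound by this ``second largest times a $\gamma$-sum'' device to cover $\sigma<2$.
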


\begin{proof}
If $\sigma\le 1$, pick $\delta\in (0,\delta _0)$ such that $\frac{\sigma+\delta}{\sigma-\delta} <1+\eps_{0}$ (and $\sigma + \delta < \hat{s}_\infty$ if $\sigma<1$).
If $\sigma>1$, pick $\delta>0$ such that $[\sigma-\delta,\sigma+\delta]\subset (1,s_\infty)$.

If $0<s<\sigma-\delta$, then $J(s)<\infty$ follows from the obvious estimate
\begin{align*}
J(s)\ \le\ \sum_{k\ge 1}\E|T_{k}|^{s}\,\E|R|^{s}\ =\ m(s)\,\E|R|^{s}.
\end{align*}
So let $s \in (\sigma-\delta,\sigma+\delta)$ hereafter. Then one can follow the proof of \cite[Lemma 4.6]{JO2010a} (replacing $(\alpha, \beta) $ and $C_i R_i$ there with $(s,\sigma-\delta)$ and $\abs{T_k R_k}$, respectively) to obtain the bound
\begin{align*}
J(s)\ &\le\ C \left(\E \abs{R}^{\sigma-\delta} \right)^{s/(\sigma -\delta)} \Erw{\left(\sum_{k=1}^N \abs{T_k}^{\sigma-\delta}  \right)^{s/(\sigma -\delta)}}\ <\ \infty\\
&=\ C \left(\E \abs{R}^{\sigma-\delta} \right)^{s/(\sigma -\delta)}m_{\eps_{0}}\left(\frac{s}{1+\eps_{0}}\right)\ <\ \infty
\end{align*}
for some constant $C\in\R_{>}$. Here we should note that, if $\sigma-\delta < 1$, the second expectation on the right-hand side is indeed finite because $s/(\sigma-\delta)<1+\epsilon_{0}$  and $\sigma-\delta<\hat{s}_{\infty}$ ensures $m_{\eps_{0}}(\sigma-\delta)<\infty$. If $\sigma-\delta \geq 1$ then we arrive at the same conclusion, for $\sum_{k=1}^N \abs{T_k}^{\sigma-\delta} \leq \big (\sum_{k=1}^N\abs{T_k}\big )^{\sigma-\delta}$.
\end{proof}


\begin{Lemma}\label{Lemma:sumbound} 
Let $\sigma\ge\beta$. Assume \eqref{C} and $\sigma < s_\infty$ if $\sigma >1$, \eqref{D*} if $\sigma=1$, and \eqref{D} and $\sigma < \hat{s}_\infty$ if $\sigma <1$. Then $I(s)<\infty$ for all $0<s<\sigma+\delta$ and some $\delta>0$.
\end{Lemma}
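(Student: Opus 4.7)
The plan mirrors the case analysis of Lemmata~\ref{Lemma:qbound} and~\ref{Lemma:supbound}. Write $S := \sum_k T_k R_k$ and denote by $Y_1 \ge Y_2 \ge \ldots$ the decreasing order statistics of $(|T_k R_k|)_{k\ge 1}$. Fix $\delta > 0$ small. For $s \in (0,\sigma - \delta)$ the routine bound $I(s) \le C(\mu(s) \vee m(s))\E|R|^s$ is finite under the assumptions. The real work is for $s$ close to $\sigma$, split according to $s \le 1$ versus $s > 1$; note that $J(s) < \infty$ throughout by Lemma~\ref{Lemma:supbound}.

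\emph{Case $s \le 1$.} Combining subadditivity of $t \mapsto t^s$ with the inequalities $a^s - b^s \le (a-b)^s$ for $a \ge b \ge 0,\,s \le 1$, and $\sum_k|T_k R_k| - |S| = 2 \min(P_+, P_-) \le 2\sum_{k\ge 2}Y_k$ (where $P_\pm$ are the positive/negative parts of $\sum_k T_k R_k$), one obtains the pointwise estimate
\[
\Bigl|\sum_k|T_k R_k|^s - |S|^s\Bigr| \le (1 + 2^s)\sum_{k\ge 2}Y_k^s,
\]
since the first bracket $\sum_k|T_kR_k|^s-(\sum_k|T_kR_k|)^s\le\sum_{k\ge 2}Y_k^s$ (using $Y_1^s\le(\sum_k|T_kR_k|)^s$) and the second bracket $(\sum_k|T_kR_k|)^s-|S|^s\le(2\sum_{k\ge 2}Y_k)^s\le 2^s\sum_{k\ge 2}Y_k^s$. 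Taking expectations gives $I(s) \le (1 + 2^s) J(s) < \infty$.

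\emph{Case $s > 1$.} Using $||S|^s - \sum_k|T_k R_k|^s| \le ||S|^s - Y_1^s| + \sum_{k\ge 2}Y_k^s$ and the mean-value bound $||S|^s - Y_1^s| \le s\bigl(\sum_k|T_k R_k|\bigr)^{s-1} \sum_{k \ge 2}Y_k$ (from $|S| \vee Y_1 \le \sum_k|T_k R_k|$ and $||S| - Y_1| \le \sum_{k\ge 2}Y_k$) reduces the problem to showing
\[
\E \Bigl(\sum_k|T_k R_k|\Bigr)^{s-1} \sum_{k\ge 2}Y_k < \infty.
\]
Apply Hölder's inequality with conjugate exponents $(p,q)$ to split this into $\bigl(\E(\sum|T_k R_k|)^{(s-1)p}\bigr)^{1/p} \bigl(\E(\sum_{k\ge 2}Y_k)^q\bigr)^{1/q}$. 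The first factor is finite provided $(s-1)p < \sigma$, via Lemma~\ref{lemma:Burkholder} (conditional Minkowski) combined with hypothesis~\eqref{C} (or~\eqref{D*},~\eqref{D} as appropriate). For the second factor, one exploits that $\sum_{k\ge 2}Y_k$ discards the leading order statistic and hence has lighter tails than $\sum_k|T_k R_k|$: a JOC-type argument paralleling the proof of Lemma~\ref{Lemma:supbound} produces $\E(\sum_{k\ge 2}Y_k)^q < \infty$ for some $q$ strictly greater than $\sigma$.

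The main obstacle lies in this Hölder balancing for $s > 1$. Since $s$ is only slightly above $\sigma$, a naive two-exponent split with both moment orders kept below $\sigma$ is impossible (the admissible range of conjugate exponents collapses once $s > \sigma$). The remedy is precisely to push the second factor's moment range beyond $\sigma$ via the order-statistic argument; given such a range $(1,\sigma+\eta)$ for $q$, the admissible interval for $p$ becomes $(\tfrac{\sigma+\eta}{\sigma+\eta-1},\,\tfrac{\sigma}{s-1})$, which is non-empty whenever $s - \sigma < \eta(1+1/\sigma)$, and this can be ensured by shrinking $\delta$. Combined with $J(s) < \infty$ from Lemma~\ref{Lemma:supbound}, this yields $I(s) < \infty$ for all $s \in (0, \sigma + \delta)$.
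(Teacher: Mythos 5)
Your reduction for $s\le 1$ is correct and in fact tidier than the paper's case (i): both pointwise brackets are nonnegative, the identity $\sum_k|T_kR_k|-|S|=2\min(P_+,P_-)\le 2\sum_{k\ge 2}Y_k$ holds (the minimum excludes the order statistic $Y_1$), and subadditivity then gives $I(s)\le(1+2^s)J(s)$, which Lemma \ref{Lemma:supbound} makes finite. This disposes of $\sigma<1$ entirely and of $\sigma=1$ for $s\le 1$.

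The case $s>1$, however, contains a genuine gap at exactly the point where the lemma is hard. After the (correct) mean-value reduction to $\E\bigl[(\sum_k|T_kR_k|)^{s-1}\sum_{k\ge 2}Y_k\bigr]$, you apply H\"older and need $\E(\sum_{k\ge 2}Y_k)^q<\infty$ for some $q>\sigma$. This is asserted via ``a JOC-type argument paralleling Lemma \ref{Lemma:supbound}'', but that lemma controls $J(s)=\E\sum_{k\ge 2}Y_k^s$, the \emph{sum of $s$-th powers} of the non-maximal order statistics, not the $q$-th moment of their sum; for $q>1$ these are very different objects, and no result in the paper (or in the cited Jelenkovic--Olvera-Cravioto lemmas) supplies the latter. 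Worse, any attempt to prove it runs into the same cross-term problem you are trying to solve: e.g.\ $(\sum_{k\ge 2}Y_k)^q\le(\sum_k|T_kR_k|)^{q-2}\sum_{1\le j\ne k\le N}|T_jR_j||T_kR_k|$ for $q\ge 2$ is precisely the quantity handled in case (iv) of the paper's proof, so the reduction is circular. The structural point your H\"older split misses is independence: the two factors $(\sum_k|T_kR_k|)^{s-1}$ and $\sum_{k\ge 2}Y_k$ involve the \emph{same} $R_k$'s, so unconditional H\"older forces one factor's moment order above $\sigma$. The paper avoids this by expanding telescopically (Lemmata \ref{lemma:c1functions} and \ref{lemma:c2functions}), which produces terms of the form $(\sum_{k\le j}|T_kR_k|)^{s-\zeta}|T_{j+1}R_{j+1}|^{\zeta}$ or $(\sum_i|T_iR_i|)^{s-2}|T_jR_j||T_kR_k|$ with $j\ne k$; conditioning on $\bT$ then factorizes the expectation over \emph{distinct, independent} $R_j$'s, so that every moment of $R$ that appears has order strictly below $\sigma$. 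Without either proving your moment claim for $\sum_{k\ge 2}Y_k$ (which I do not see how to do short of redoing the paper's argument) or restoring this independence structure, the proof for $s>1$ does not close. (A minor further point: the admissibility condition for $p$ should read $s-\sigma<\eta/(\sigma+\eta)$ rather than $\eta(1+1/\sigma)$, though this does not affect the qualitative conclusion.)
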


\begin{proof}
The first part of the proof follows the argument given for \cite[Lemmata 4.8 and 4.9]{JO2010b}. Put $S:=\sum_{k=1}^{N}T_{k}R_{k}$, $S_{\pm}:=\sum_{k=1}^{N}(T_{k}R_{k})^{\pm}$ and $S_{\pm}(s):=\sum_{k=1}^{N}\big((T_{k}R_{k})^{\pm}\big)^{s}$.
Then
\begin{align*}
I(s)\ &=\ \E\left||S|^{s}-S_{+}^{s}(s)-S_{-}^{s}(s)\right|\\
&=\ \E\left|(S^{+})^{s}+(S^{-})^{s}-S_{+}(s)-S_{-}(s)\right|\\
&\le\ \E\left|(S^{+})^{s}-S_{+}(s)\right|+\E\left|(S^{-})^{s}-S_{-}(s)\right|.
\end{align*}
whence it suffices to show $\E\left|(S^{\pm})^{s}-S_{\pm}(s)\right|<\infty$ and, by an obvious reflection argument, only $\E\left|(S^{+})^{s}-S_{+}(s)\right|<\infty$. As in \cite{JO2010b}, we estimate
\begin{align}
\begin{split}\label{eq:S^+^s-S_+(s)}
\E\left|(S^{+})^{s}-S_{+}(s)\right|\ \le\ \E S_{+}(s)\1_{\{S_{+}\le S_{-}\}}\ &+\ \E\left(S_{+}^{s}-(S_{+}-S_{-})^{s}\right)\1_{\{S_{+}>S_{-}\}}\\
&+\ \E|S_{+}^{s}-S_{+}(s)|
\end{split}
\end{align}
The first two expectations on the right-hand side can be bounded by a constant times
\begin{align*}
\left( \E{\abs{R}^{s/(1+\epsilon)}} \right)^{1 + \epsilon}\E\left( \sum_{k=1}^N \abs{T_k}^{s/(1+\epsilon)} \right)^{1+\epsilon}
\end{align*}
if $\sigma<1$ (choose $a=s/(1+\epsilon)$ and $b=s\epsilon/(1+\epsilon)$ in the proof of \cite[Lemma 4.9]{JO2010b}), and by a constant times
\begin{align*}
\E|R|\,\E{\abs{R}^{s-1}}\E\left(\sum_{k=1}^{N}\abs{T_k}\right)^{s}
\end{align*}
if $\sigma\ge 1$. These bounds are finite if $0<s<\sigma+\delta$ for sufficiently small $\delta>0$ and $\epsilon<\epsilon_{0}$ with $\epsilon_{0}$ given by \eqref{D} or \eqref{D*}.

It remains to show finiteness of the final expectation in \eqref{eq:S^+^s-S_+(s)}, viz.\ of
$$ L(s)\ :=\ \E \abs{ \left( \sum_{k=1}^N (T_k R_k)^+ \right)^s -
\sum_{k=1}^N \left( (T_k R_k)^+ \right)^s } $$ 
for all $0<s<\sigma+\delta$ and some $\delta>0$. We will do so by distinguishing the cases
$$ \textrm{(i)}\ \sigma<1,\quad \textrm{(ii)}\ \sigma=1,\quad\textrm{(iii)}\ 1<\sigma\le 2\quad\text{and}\quad\textrm{(iv)}\ \sigma>2. $$

(i) If $\sigma<1$, then for each $0<s\le 1$ (see also \cite[proof of Lemma 4.9]{JO2010b})
\begin{align*}
L(s)\ &=\ \Erw{\sum_{k=1}^N \left((T_k R_k)^+ \right)^{s}-\left(\sum_{k=1}^{N}(T_k R_k)^+\right)^{s}}\\
&\le\ \Erw{\sum_{k=1}^N \left((T_k R_k)^+ \right)^{s}-\max_{1\le k\le N}\left((T_k R_k)^+\right)^{s}}\\
&\le\ \Erw{\sum_{k=1}^N |T_k R_k|^{s}-\max_{1\le k\le N}\left((T_k R_k)^+\right)^{s}-\max_{1\le k\le N}\left((T_k R_k)^{-} \right)^{s}}\\
&\le\ \Erw{\sum_{k=1}^N |T_k R_k|^{s}-\max_{1\le k\le N}|T_k R_k|^{s}}\ =\ J(s),
\end{align*}
and the latter function is finite by Lemma \ref{Lemma:supbound}.

\vspace{.1cm}
(ii) Next, let $\sigma=1$. Fix $\zeta$ such that $1 - \delta_0 < \zeta < 1$ and $(1+ \epsilon_0) \zeta >1$, where $\delta_0, \epsilon_0$ are given by condition \eqref{D*}. Then choose $\delta < \min\{(1+ \epsilon_0) \zeta -1, \zeta, 2\zeta-1 \}=(1+ \epsilon_0) \zeta -1$. Let $1 < s < 1 + \delta$ and note that $s-\zeta<1$. Applying Lemma \ref{lemma:c1functions} to $f(x)=x^s$ (thus $\xi=s-1$) and the $\zeta$ chosen above, we infer for a suitable constant $C\in\R_{>}$
\begin{align*}
L(s)\ &=\ \E \abs{\left(\sum_{k=1}^N (T_k R_k)^+ \right)^s -
\sum_{k=1}^N \left( (T_k R_k)^+ \right)^s}\\
&\leq\ C\,\Erw{\sum _{j=1}^{N-1}\left(\sum_{k=1}^j |T_k R_k|\right)^{s -\zeta}|T_{j+1} R_{j+1}|^{\zeta}}\\
&=\ C\,\Erw{ \sum _{j=1}^{N-1} \E_{\bT}\left(\left( \sum_{k=1}^j |T_k R_k|
\right)^{s -\zeta}|T_{j+1} R_{j+1}|^{\zeta}\right)}\\
&=\ C\,\E \abs{R}^{\zeta}\,\Erw{\sum _{j=1}^{N-1}  |T_{j+1}|^{\zeta}\,\E_{\bT}\left(\sum_{k=1}^j |T_k R_k|\right)^{s-\zeta}}\\
&\le\ C\,\E \abs{R}^{\zeta}\,\Erw{ \sum _{j=1}^{N-1}  |T_{j+1}|^{\zeta} \left( \E_{\bT}  (\sum_{k=1}^j |T_k R_k|)^\zeta
\right)^{(s-\zeta)/\zeta}}\\
&\le\ C\,\E \abs{R}^{\zeta}\,\Erw{ \sum _{j=1}^{N-1}  |T_{j+1}|^{\zeta} \left( \E_{\bT}  \sum_{k=1}^j |T_k R_k|^{\zeta}\right)^{(s-\zeta)/\zeta}}\\
&=\ C\,(\E \abs{R}^\zeta)^{s/\zeta}\,\Erw{ \left( \sum_{k=1}^N |T_k|^{\zeta} \right)^{s/\zeta}}\ <\ \infty
\end{align*}
where Jensen's inequality and then subadditivity have been utilized in line 5. Finiteness of the final expectation is guaranteed by \eqref{D*}.

\vspace{.1cm}
(iii) Turning to the case $1<\sigma<2$, we proceed in the same manner. Applying again Lemma \ref{lemma:c1functions} to $f(x)=x^s$ for $0<s<s_{\infty}\wedge 2$, but now with $\zeta=1$, we obtain for some $C\in\R_{>}$
\begin{align*}
L(s)\ &\leq\ C\,\E|R|\,\Erw{\sum _{j=1}^N|T_{j}|\,\E _{\bT}\left(\sum_{k=1}^{N}|T_k R_k|\right)^{s -1}}\\
&\leq\ C\,\E |R|\,\Erw{\sum _{j=1}^N|T_{j} |\left(\E_{\bT}\sum_{k=1}^{N}|T_kR_k|\right)^{s -1}}\\
& \leq\ C(\E |R|)^{s}\,\E\left(\sum_{k=1}^N |T_k|\right)^{s}\ <\ \infty
\end{align*}
where finiteness of the last expectation is guaranteed by \eqref{C}.

\vspace{.1cm}
(iv) Finally left with the case $\sigma\ge 2$, we fix again $\delta<1$ sufficiently small such that $s+\delta<s_{\infty}$. For $s\in (\sigma,\sigma+\delta)$ and small $\theta>0$, define
$$ p(\theta):= \frac{\sigma}{s-2} - \theta\quad\text{and}\quad q(\theta) := \frac{p(\theta)}{p(\theta)-1} = \frac{\sigma-\theta(s-2)}{2+2\theta-(s-\sigma) -\theta s}. $$
As one can readily check, $ \lim_{\theta\to 0} p(\theta) >1 $ and $1 < \lim_{\theta\to 0} q(\theta) <\sigma$. So we may fix $\theta >0$ so small (depending on $\delta$) that $p=p(\theta)$ and $q=q(\theta)$ for this $\theta$ satisfy
\begin{align*}
1 < p < \infty, \quad 1 < q < \sigma \quad \text{and} \quad (s-2)p < \sigma.
\end{align*}
In the following estimation, $C$ denotes a generic finite positive constant which may differ from line to line. Using Lemma \ref{lemma:c2functions} from the Appendix with $f(x)=x^{s}$, we obtain
\begin{align*}
L(s)\ &\le\ C\,\Erw{\left( \sum_{i=1}^N (T_i R_i)^+ \right)^{s-2} \sum_{1 \le j\ne k\le N} (T_j R_j)^+ (T_k R_k)^+} \\
&\le\ C\,\Erw{\left( \sum_{i=1}^N \abs{T_i R_i} \right)^{s-2} \sum_{1 \le j \neq k \le N} \abs{T_j R_j}\abs{T_k R_k}} \\
&= \ C\,\E \left( \E_{\bT} \left[ \left( \sum_{i=1}^N \abs{T_i R_i} \right)^{s-2} \sum_{1 \le j \neq k \le N} \abs{T_j R_j}\abs{T_k R_k} \right] \right) \\
&=\ C\,\E \left( \sum_{1 \le k \neq l \le N}  \E_{\bT} \left[ \left( \sum_{i=1}^N \abs{T_i R_i} \right)^{s-2}  \abs{T_k R_k}\abs{T_l R_l} \right] \right)\\
&\le\ C\,\E \left( \sum_{1 \le k \neq l \le N} \left( \E_{\bT} \left( \sum_{i=1}^N \abs{T_i R_i} \right)^{p(s-2)} \right)^{1/p}\Bigl( \E_{\bT} \abs{T_k R_k}^q \abs{T_l R_l}^q \Bigr)^{1/q} \right)\\ 
&\le\ C\,\E \left[ \sum_{1 \le k \neq l \le N} \left(\left( \sum_{i=1}^N \abs{T_i} \right)^{p(s-2)} \E\abs{R}^{p(s-2)}  \right)^{1/p}  \big(\E \abs{R}^q \big)^{2/q}\abs{T_k} \abs{T_l} \right] \\
&=C\,\big(\E\abs{R}^{p(s-2)}\big)^{1/p}\big(\E \abs{R}^q\big)^{2/q}\,\Erw{ \left( \sum_{i=1}^N \abs{T_i} \right)^{s-2} \left( \sum_{1 \le k \neq l \le N} \abs{T_k}\abs{T_l} \right) }\\
&\le\ C\,\big(\E\abs{R}^{p(s-2)}\big)^{1/p}\big(\E \abs{R}^q\big)^{2/q}\,\Erw{ \left( \sum_{i=1}^N \abs{T_i} \right)^{s-2} \left( \sum_{j=1}^N \abs{T_j} \right)^2 }  \\
&=\ C\,\big(\E\abs{R}^{p(s-2)}\big)^{1/p}\big(\E \abs{R}^q\big)^{2/q}\,\E\left(\sum_{k=1}^{N}|T_{k}|\right)^{s}\ <\ \infty
\end{align*}
where Lemma \ref{lemma:Burkholder} has been used for line 6.
\end{proof}

The previous proof gives rise to a Corollary which may be interesting in its own right:

\begin{Cor}\label{Cor:heavytail}
Let $(R_k)_{k \ge 1}$ be a sequence iid random variables independent of the random weights $(T_k)_{k\ge 1}$. Let $\sigma >1$, $0<\delta <1$ and suppose that $\E |R_1|^s <\infty $ for $s<\sigma$ and $\E (\sum _{k=1}^N |T_k|)^{\sigma+\delta}<\infty$. Then
$$ \E\left|\left(\sum _{k=1}^N(T_kR_k)^+\right)^s - \sum_{k=1}^N((T_kR_k)^+)^s\right| <\infty $$ 
for all $\sigma<s\leq \sigma+\delta$.
\end{Cor}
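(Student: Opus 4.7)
The plan is to extract the arguments of cases (iii) and (iv) of the proof of Lemma \ref{Lemma:sumbound}, noting that those arguments used only the independence of the i.i.d.\ family $(R_k)_{k\ge 1}$ from $\bT$ together with the stated moment hypotheses, and never the SFPE structure of $R$. Setting
\begin{equation*}
L(s)\ :=\ \E\left|\left(\sum_{k=1}^N (T_k R_k)^+\right)^s - \sum_{k=1}^N \left((T_k R_k)^+\right)^s\right|,
\end{equation*}
I would split the range $s \in (\sigma, \sigma+\delta]$ at $s = 2$, handling $s \le 2$ by the argument of case (iii) and $s > 2$ by the argument of case (iv).

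For $\sigma < s \le 2$, I would invoke Lemma \ref{lemma:c1functions} with $f(x) = x^s$ and $\zeta = 1$ to obtain
\begin{equation*}
L(s)\ \le\ C\,\Erw{\sum_{j=1}^{N-1}\left(\sum_{k=1}^{j}|T_k R_k|\right)^{s-1}|T_{j+1}R_{j+1}|}.
\end{equation*}
Conditioning on $\bT$ and using that $(\sum_{k=1}^j |T_k R_k|)^{s-1}$ and $|T_{j+1}R_{j+1}|$ are conditionally independent, then applying Jensen's inequality (valid since $s-1 \le 1$) together with $\E_\bT|T_k R_k| = |T_k|\,\E|R_1|$, reduces this to a constant multiple of $(\E|R_1|)^s\,\E(\sum_{k=1}^N |T_k|)^s$, which is finite by the hypothesis on the weights and the bound $s \le \sigma + \delta$.

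For $2 < s \le \sigma + \delta$, I would apply Lemma \ref{lemma:c2functions} to $f(x) = x^s$, obtaining
\begin{equation*}
L(s)\ \le\ C\,\Erw{\left(\sum_{i=1}^N |T_i R_i|\right)^{s-2}\sum_{1 \le j \ne k \le N}|T_j R_j|\,|T_k R_k|},
\end{equation*}
and select conjugate exponents $p, q$ with $p$ slightly smaller than $\sigma/(s-2)$, so that both $p(s-2) < \sigma$ and $1 < q < \sigma$; the upper bound on $q$ forces $s < \sigma + 1$, which is precisely where the assumption $\delta < 1$ enters. A conditional Hölder inequality separates $R$-factors from $T$-factors, \eqref{eq:Burkholder} controls the conditional $p(s-2)$-moment of $\sum_k |T_k R_k|$, and one arrives at the finite bound
\begin{equation*}
C\,\bigl(\E|R_1|^{p(s-2)}\bigr)^{1/p}\bigl(\E|R_1|^q\bigr)^{2/q}\,\E\left(\sum_{k=1}^N |T_k|\right)^s.
\end{equation*}

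The only nontrivial step is the exponent selection in the case $s > 2$: one must verify that $p$ and $q$ can be chosen so that both $p(s-2)$ and $q$ remain strictly below $\sigma$ for every $s \in (2, \sigma+\delta]$, which is exactly what the restriction $\delta < 1$ guarantees. Beyond this bookkeeping, the proof is a direct transcription of the corresponding portion of the proof of Lemma \ref{Lemma:sumbound}, with $\E|R|^s$ replaced throughout by $\E|R_1|^s$.
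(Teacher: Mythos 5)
Your proposal is correct and follows essentially the same route as the paper, which likewise disposes of the corollary by re-running cases (iii) and (iv) of the proof of Lemma \ref{Lemma:sumbound} (splitting the range of $s$ at $2$) and noting that the case (iv) exponent choice still works when $\sigma<2<s\le\sigma+\delta$ precisely because $\delta<1$ keeps $q<\sigma$. You have simply made explicit the bookkeeping that the paper leaves to the reader.
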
 

\begin{proof}
If $\sigma \ge 2$ or $\sigma + \delta \le 2$, then the result is contained in the proof of Lemma \ref{Lemma:sumbound}. If $\sigma<2$, but $s:=\sigma+\delta>2$, then observe that case (iv) also works when $\sigma<2<s$. 
\end{proof}

\begin{Rem}\rm
In the case when $\sigma>1$ is not an integer, the finiteness of $L(s)$ for $0<s<\sigma+\delta$ and some $\delta>0$ sufficiently small may alternatively be inferred by the same arguments as in \cite[Proof of Lemma 5.2]{JO2010b}.
\end{Rem}

%

\begin{appendix}
\section*{Appendix}
\begin{proof}[Proof of Lemma \ref{Lemma:holomorph}]
We have the uniform bound
\begin{align*}
& \int_0^\infty \abs{t^{z-1} f(t)} dt = \int_0^\infty t^{\Re z -1} \abs{f(t)} dt \\
\le & \int_0^1 t^{\sigma_0 -1} \abs{f(t)} dt + \int_1^\infty t^{\sigma_1 -1} \abs{f(t)} dt < \infty .
\end{align*}
In order to show holomorphicity, take any closed path $c$ in the strip $\sigma_0 < \Re z < \sigma_1$, then we may use Fubini's theorem to infer
\begin{align*}
\int_c g(z) dz = & \int_c \left( \int_0^\infty t^{z-1} f(t) dt\right) dz \\
= & \int_0^\infty \left( \int_c t^{z-1} dz \right) f(t) dt = 0.
\end{align*}
In fact, $g$ is the Mellin-Transform of the measure $f(t) dt$.
%
\end{proof}

\begin{Lemma}\label{lemma:c1functions}
Let $f: \R_{\ge} \to \R_{\ge}$ be a differentiable function such that $f(0)=0$ and $f'$  is H\"older continuous of order $\xi\in (0,1]$, i.e.
$$ |f'(x_1)-f'(x_2)|\ \le\ C |x_1-x_2|^{\xi}$$
for some $C\in\R_{>}$ and all $x_{1},x_{2}\in\R_{\ge}$. Then
\begin{equation}\label{eq:c1bound} 
\left|f(s_n) - \sum_{k=1}^{n}f(x_k) \right|\ \le\ C \sum_{j =1}^{n-1} s_{j}^{1+\xi - \zeta} x_{j+1}^{\zeta}
\end{equation}
for any $\frac{1+\xi}{2}\leq \zeta \leq 1$ and $x_1,..., x_n \in \R_{\ge}$, where $s_{n}:=\sum_{j=1}^{n}x_{j}$.
\end{Lemma}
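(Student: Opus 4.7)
My plan is to reduce the inequality to a telescoping sum of three-term discrepancies, bound each via two complementary integral representations, and then interpolate between them. Setting $s_0:=0$, I have $f(s_n)=\sum_{j=1}^n[f(s_j)-f(s_{j-1})]$; subtracting $\sum_{k=1}^n f(x_k)$ and observing that the $j=1$ contribution vanishes (because $s_1=x_1$ and $f(0)=0$), it suffices to estimate
$$\Delta_j\ :=\ f(s_{j-1}+x_j)-f(s_{j-1})-f(x_j),\qquad j=2,\ldots,n,$$
by $C\,s_{j-1}^{1+\xi-\zeta}x_j^\zeta$; summing over $j=2,\ldots,n$ and reindexing $j\mapsto j+1$ then reproduces the right-hand side of \eqref{eq:c1bound}.

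Using $f(y)=\int_0^y f'(u)\,du$, each $\Delta_j$ admits two symmetric integral representations:
$$\Delta_j\ =\ \int_0^{x_j}[f'(s_{j-1}+t)-f'(t)]\,dt\ =\ \int_0^{s_{j-1}}[f'(x_j+t)-f'(t)]\,dt.$$
Applying the H\"older bound $|f'(a+b)-f'(b)|\le C\,a^{\xi}$ to the integrand of each representation yields the complementary estimates $|\Delta_j|\le C\,s_{j-1}^{\xi}x_j$ and $|\Delta_j|\le C\,s_{j-1}\,x_j^{\xi}$.

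To interpolate, note that $[(1+\xi)/2,1]\subset[\xi,1]$ since $\xi\le 1$, so for $\zeta$ in the stated range I may define $\lambda:=(\zeta-\xi)/(1-\xi)\in[0,1]$ when $\xi<1$ (the boundary case $\xi=1$ forces $\zeta=1$ and is handled by the first bound alone). Since a nonnegative quantity dominated by both $B_1$ and $B_2$ is dominated by $B_1^{\lambda}B_2^{1-\lambda}$, I obtain
$$|\Delta_j|\ \le\ C\,(s_{j-1}^{\xi}x_j)^{\lambda}\,(s_{j-1}\,x_j^{\xi})^{1-\lambda}\ =\ C\,s_{j-1}^{1+\xi-\zeta}\,x_j^{\zeta},$$
the exponents matching through the identities $\lambda\xi+(1-\lambda)=1+\xi-\zeta$ and $\lambda+(1-\lambda)\xi=\zeta$.

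No step poses a real obstacle: the telescoping is immediate, the two integral representations differ only by relabeling the two arguments, and the interpolation rests on the elementary bound $\min(B_1,B_2)\le B_1^{\lambda}B_2^{1-\lambda}$ for nonnegative quantities. The only piece of arithmetic to verify is the exponent identity just displayed, and the only subtle point is confirming that the range of $\zeta$ imposed in the statement indeed keeps $\lambda$ inside $[0,1]$.
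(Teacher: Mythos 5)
Your proof is correct and follows essentially the same route as the paper's: the telescoping decomposition is just the unrolled form of the paper's induction, the integral representation $\Delta_j=\int_0^{x_j}[f'(s_{j-1}+t)-f'(t)]\,dt$ coincides (after the substitution $t=sy$) with the one used there, and your interpolation with $\lambda=(\zeta-\xi)/(1-\xi)$ is a reparametrization of the paper's choice of $\sigma$. The exponent arithmetic and the check that $\lambda\in[0,1]$ on the stated range of $\zeta$ are both right.
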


\begin{proof}
We will use induction over $n\ge 2$. For $n=2$, use $f(0)=0$ to obtain
\begin{equation}\label{eq1:c1functions}
|f(x+y)-f(x)-f(y)|\ =\ \left|\int_0^1 \left[f'(x+sy) - f'(sy) \right]y\ ds\right|\ \leq\ Cx^{\xi} y,
\end{equation}
for all $x,y\in\R_{\ge}$ which gives the result if $\zeta =1$. Otherwise, pick any $0<\sigma <1$. Then \eqref{eq1:c1functions} provides us with
\begin{align*}
|f(x+y)-f(x)-f(y)|^2\ &\le\ (Cx^{\xi}y)^{1+\sigma}\,(Cxy^{\xi})^{1-\sigma}\\
&=\ C^2x^{\xi(1+\sigma)+1-\sigma }y^{\xi(1-\sigma)+1+\sigma},
\end{align*}
which proves \eqref{eq:c1bound} for $n=2$ with $\zeta=\frac{\xi(1-\sigma)+1+\sigma}{2}$. For the inductive step $n-1\to n$, we note that
\begin{align*}
\abs{f(s_n) -\sum_{j=1}^{n}f(x_j)}\ &\le\ \abs{f(s_n) - f(s_{n-1}) - f(x_n)} + \abs{f(s_{n-1}) -\sum_{j=1}^{n-1}f(x_j)} \\
&\leq\ C\left(s_{n-1}^{1+\xi -\zeta  }x_n^{\zeta}+\sum _{j=1}^{n-2}s_{j}^{1+\xi -\zeta }x_{j+1}^{\zeta}\right)\\
&=\ C\sum _{j=1}^{n-1}s_{j}^{1+\xi -\zeta }x_{j+1}^{\zeta}
\end{align*}
which proves our claim.
\end{proof}

\begin{Lemma}\label{lemma:c2functions}
Let $f: \R_{\ge} \to \R_{\ge}$ be a twice continuously differentiable function such that $f''$ is nonnegative and increasing. Then
\begin{equation}\label{eq:c2bound} 
\left|f(s_n) - \sum_{k=1}^{n}f(x_k) \right|\ \le\ f''(s_n) \sum_{i \neq j}x_i x_j .
\end{equation}
for all $x_1,...,x_n \in \R_{\ge}$, where $s_{n}:=\sum_{j=1}^{n}x_{j}$.
\end{Lemma}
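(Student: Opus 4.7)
The plan is to proceed by induction on $n \ge 1$, with the two-summand case $n=2$ doing all the real work. A preliminary observation: as stated the lemma implicitly requires $f(0)=0$ (otherwise it fails already for constants), and this is automatic in the intended application $f(x)=x^s$. Granting this, the case $n=1$ is trivial.

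For $n=2$ I would establish the sharper inequality
\begin{equation*}
  0\ \le\ f(x+y) - f(x) - f(y)\ \le\ f''(x+y)\,xy
\end{equation*}
for all $x,y\ge 0$. Using $f(0)=0$ together with Fubini, one has the exact identity
\begin{equation*}
  f(x+y) - f(x) - f(y)\ =\ \int_0^y\!\int_0^x f''(u+v)\,dv\,du,
\end{equation*}
from which the lower bound is immediate by nonnegativity of $f''$, while the upper bound follows because $u+v \le x+y$ on the integration domain and $f''$ is assumed increasing.

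For the inductive step from $n-1$ to $n$, writing $s_n = s_{n-1}+x_n$ and applying the triangle inequality gives
\begin{equation*}
\left|f(s_n) - \sum_{k=1}^{n}f(x_k)\right|\ \le\ \bigl|f(s_n) - f(s_{n-1}) - f(x_n)\bigr|\ +\ \left|f(s_{n-1}) - \sum_{k=1}^{n-1}f(x_k)\right|.
\end{equation*}
The first term is bounded by $f''(s_n)\,s_{n-1}x_n$ via the $n=2$ case applied to $(s_{n-1},x_n)$. The second is, by induction, at most $f''(s_{n-1})\sum_{\substack{i\ne j\\ i,j\le n-1}}x_ix_j \le f''(s_n)\sum_{\substack{i\ne j\\ i,j\le n-1}}x_ix_j$, using monotonicity of $f''$ and $s_{n-1}\le s_n$. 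Combining, the bracket factor is at most
\begin{equation*}
  s_{n-1}x_n\ +\ \sum_{\substack{i\ne j\\ i,j\le n-1}}x_ix_j\ =\ \sum_{i=1}^{n-1}x_ix_n\ +\ \sum_{\substack{i\ne j\\ i,j\le n-1}}x_ix_j\ \le\ \sum_{\substack{i\ne j\\ i,j\le n}}x_ix_j,
\end{equation*}
since each cross term $x_ix_n$ with $i<n$ appears twice in the target double sum, leaving an extra copy to spare.

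The argument is essentially routine once the $n=2$ bound is in place. The only subtleties I would flag are the implicit need for $f(0)=0$ and the double use of monotonicity of $f''$: once inside the Fubini integral to pass $u+v$ to $x+y$, and once when enlarging $f''(s_{n-1})$ to $f''(s_n)$ so that the inductive bound can be absorbed cleanly.
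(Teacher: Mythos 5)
Your proof is correct and follows essentially the same route as the paper: the same $n=2$ inequality $0\le f(x+y)-f(x)-f(y)\le f''(x+y)\,xy$ obtained from a double-integral representation of the second difference, followed by the same triangle-inequality induction. Your observation that the hypothesis $f(0)=0$ is implicitly required is accurate — the paper's proof also invokes $f(0)=0$ even though the lemma statement omits it.
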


\begin{proof}
We will use induction over $n\ge 2$. For $n=2$, use $f(0)=0$ to obtain
\begin{align*}
f(x+y)-f(x)-f(y)\ &=\ \int_0^1 \left[f'(x+sy) - f'(sy) \right]y\ ds \\
&=\ \int_0^1 \left(\int_0^1 \frac{d}{dr} f'(rx+sy)\ dr \right) y\ ds \\
&=\ \int_0^1 \int_0^1 f''(rx+sy)\ x y\ dr\ ds \\
\end{align*}
By assumption $f''(rx+sy)\le f''(x+y)$ for all $r,s\in [0,1]$, whence
$$ 0 \le f(x+y)-f(x)-f(y) \le f''(x+y)\ xy $$
as asserted. For the inductive step $n-1\to n$, we note that
\begin{align*}
\abs{f(s_n) -\sum_{j=1}^{n}f(x_j)}\ &\le\ \abs{f(s_n) - f(s_{n-1}) - f(x_n)} + \abs{f(s_{n-1}) -\sum_{j=1}^{n-1}f(x_j)} \\
&\le\ f''(s_n)\,x_n\, s_{n-1} + f''(s_{n-1}) \sum_{1 \le i \neq j \le n-1}x_i x_j \\
&\le\ f''(s_n) \sum_{1 \le i \neq j \le n} x_i x_j .
\end{align*}
which proves our claim for general $n\ge 2$.
\end{proof}
\end{appendix}

\nocite{JO2010a}

\noindent \textbf{Acknowledgements}\\
G.A. and S.M. were supported by Deutsche Forschungsgemeinschaft (SFB 878). E. D. was supported by MNiSW grant
N N201 393937.

%
%

\noindent \textsc{Gerold Alsmeyer; Sebastian Mentemeier \\ Westf\"alische Wilhelms-Universit\"at M\"unster, \\ Institut f\"ur Mathematische Statistik, Einsteinstra\ss e 62, 48149
M\"unster. } 
gerolda@math.uni-muenster.de, mentemeier@uni-muenster.de \\

\medskip

\noindent \textsc{Ewa Damek \\ Uniwersytet
Wroclawski, \\ Instytut Matematyczny, Pl. Grunwaldzki 2/4, 50-384
Wroclaw,}\\
edamek@math.uni.wroc.pl 
\end{document}